\documentclass[a4paper]{article}
\usepackage{latexsym}
\usepackage{amsmath}
\usepackage{amssymb}
\usepackage{enumerate}
\usepackage{amsthm} 
\usepackage{graphicx} 
\usepackage{amssymb,latexsym,times}
\usepackage{proof}
\usepackage{setspace}
\usepackage{array}

\newcolumntype{C}[1]{>{\centering\let\newline\\\arraybackslash\hspace{0pt}}m{#1}}
\newcommand{\len}{\textrm{len}}
\newcommand{\Dom}{\textrm{Dom}}
\newcommand{\Var}{\textrm{Var}}
\newcommand{\Fr}{\textrm{Fr}}

\newcommand{\N}{\mathbb{N}}

\newcommand{\Z}{\mathbb{Z}}
\newcommand{\Po}{\mathcal{P}}
\newcommand{\I}{\mathcal{I}}
\newcommand{\M}{\mathcal{M}}
\newcommand{\on}{\exists}
\newcommand{\ja}{\wedge}
\newcommand{\tai}{\vee}
\newcommand{\yli}{\overline}

\renewcommand{\a}{\alpha}
\renewcommand{\b}{\beta}

\newcommand{\ep}{\epsilon}
\newcommand{\de}{\delta}
\def\dep{=\!\!}
\newcommand {\indep}[3] {#2 ~\bot_{#1}~ #3}
\newcommand {\indepc}[2] {#1 ~\bot~ #2}

\newcommand{\sub}{\subseteq}\theoremstyle{plain}
\newtheorem{lau}[equation]{Theorem}
\newtheorem{lem}[equation]{Lemma}
\newtheorem{prop}[equation]{Proposition}
\newtheorem{kor}[equation]{Corollary}

\theoremstyle{definition}
\newtheorem{maa}[equation]{Definition}

\newtheorem{esim}[equation]{Example}

\pagestyle{plain}
\setcounter{page}{1}
\addtolength{\hoffset}{-1.15cm}
\addtolength{\textwidth}{2.7cm}
\addtolength{\voffset}{0.45cm}
\addtolength{\textheight}{-0.9cm}

\title{Axiomatizing first-order consequences in \\independence logic}
\author{Miika Hannula}

\begin{document}
\maketitle

\begin{abstract}
Independence logic, introduced in \cite{erikjouko}, cannot be effectively axiomatized. However, first-order consequences of independence logic sentences can be axiomatized. In this article we give an explicit axiomatization and prove that it is complete in this sense. The proof is a generalization of the similar result for dependence logic introduced in \cite{juhajouko}.
\end{abstract}

\section{Introduction}

Independence logic \cite{erikjouko} is a recent variant of dependence logic that extends first-order logic by formulas
$$t_1 \bot_{t_3} t_2$$
where $t_i$ is a tuple of terms. The intuitive meaning of this formula is that the sets of values of $t_1$ and $t_2$ are independent of each other for a fixed value of $t_3$. Dependence logic \cite{kirja} adds to first-order logic formulas
$$\dep(t_1, \ldots ,t_n)$$
where $t_i$ is a term. Intuitively, this formula says that the values of $t_1, \ldots ,t_{n-1}$ determine the value of $t_n$. As the notions of dependence and independence are not interesting for single assignments, the semantics of these two logics are defined for sets of assignments, called teams. 

Historically these logics are preceded by partially ordered quantifiers (Henkin quantifiers) of Henkin \cite{henkin} and Independence-Friendly (IF) logic of Hintikka and Sandu \cite{jaakkogabriel}. Dependence logic is a variant of these two and equivalent in expressive power whereas independence logic is a bit more general formalism. Dependence logic sentences can be translated to existential second-order logic (ESO) sentences and vice versa. From the point of view of descriptive complexity theory, this means that dependence logic captures all the classes of \textit{models} in NP. Still, on the level of formulas, dependence logic is weaker in expressive power than ESO. Dependence logic formulas correspond to the ESO sentences that define a downwards closed class of teams \cite{juhajouko2}. 

This restriction does not apply to independence logic because it is not downwards closed. Galliani has showed that in expressive power independence logic is equivalent to ESO both on the level of formulas and sentences \cite{pietro}. It follows that all the NP classes of \textit{teams} are also definable in independence logic. 

In this article we consider only first-order consequences of independence logic. The reason for this restriction is that independence logic cannot be effectively axiomatized. In independence logic it is possible to describe infinity. Using this and going a little further, there is an independence logic formula $\Theta$ in the language of arithmetic saying that some elementary axioms of number theory fail or else some number has infinitely many predecessors. Now let $\phi$ be any first-order formula in the language of arithmetic. We show that the following claims are equivalent:
\begin{enumerate}
\item\label{true} $\phi$ is true in $(\N,+,\times,<)$.
\item\label{valid} $\Theta \tai \phi$ is valid (true in every model) in independence logic.
\end{enumerate}
Suppose first (\ref{true}) holds. Let $M$ be an arbitrary model of the language of arithmetic. If $M \not\models \Theta$, then we have that $M \cong (\N,+,\times ,<)$, and thus $M \models \phi$ when $M \models \Theta \tai \phi$. For the converse, suppose (\ref{valid}) holds. Since $(\N,+,\times,<) \not\models \Theta$, we have that $(\N, +,\times,<)\models \phi$.

The above shows that the truth in $(\N,+,\times,<)$ can be reduced to validity in independence logic. By Tarski's Undefinability of Truth, validity in independence logic is non-arithmetical. Therefore, independence logic cannot have any effective complete axiomatization.

The above result of non-axiomatizability holds also for dependence logic. However, this is not an end point of research here. There are at least two directions left. One is to modify the semantics in order to get a complete axiomatization. A good example of this is Henkin semantics for second-order logic, and for independence logic Galliani has taken this direction in \cite{pietro2}. Another is to only consider some fragment of a logic. In dependence logic this direction has been taken in \cite{juhajouko} where Kontinen and V\"a\"an\"anen present an explicit axiomatization for dependence logic and show that, although it cannot be fully complete, it is complete with respect to the first-order consequences of dependence logic sentences. Another interesting line is to consider atomic fragments. Although then in many cases we can directly apply  axiomatizations given in database theory. For instance, Armstrong's axioms for functional dependencies are also sound and complete for dependence atoms.\cite{arm} The atomic fragment of independence logic is however more complicated. Unlike with dependence atoms, the implication problem for independence atoms is undecidable, and therefore lacks finite axiomatization.\cite{herr1,herr2} Despite this, independence atoms have been axiomatized in \cite{hankon14} where completeness is obtained by using inclusion atoms and implicit existential quantification in the intermediate steps of derivations.

In this paper we will generalize the result  of \cite{juhajouko}   to independence logic. Although independence logic is strictly stronger than dependence logic, on the level of sentences these two logics coincide. Independence logic sentences can be translated to dependence logic sentences via ESO \cite{erikjouko}. So we already know that at least somehow this generalization can be done. 

Another background for this article is \cite{pietro} where Galliani studied variants of independence logic and different ways of defining semantics for these logics. One of these definitions will be both reasonable and useful for our purposes and will therefore be used in this paper. The semantics we will use is called LAX semantics in Galliani's work. Using it we can secure that only the variables occurring free in a formula will affect to the truth value of that formula. With LAX semantics we will be able to construct, for every independence formula, an equivalent formula in prenex normal form, and furthermore, an equivalent formula in a precise conjunctive normal form. This may be interesting in itself, although the constructions will be presented as parts of the completeness proof.

The structure of this paper is the following. In the next section we will go through some preliminaries that are necessary for this topic. In Section \ref{rulesit} the axioms and the rules of inference are introduced. In Section \ref{sound} we will show that our new deduction system is sound, and in Section \ref{complete} we will show that it is also complete in respect of first-order consequences of independence logic sentences. At the end of the paper some examples and further questions will be presented.

\section{Preliminaries}\label{pre}

In this section we introduce independence logic ($\I$) and go through some results that are needed in this paper. A few remarks on notations are needed. The most important one is that there will not be any notational distinction between tuples and singles. For example, $x$ can refer either to the single variable $x$ or to the tuple of variables $x=(x_1, \ldots , x_k)$. However, it is always mentioned in the text if we are considering tuples instead of singles at the time. Also if $x=(x_1, \ldots ,x_k)$ and $y=(y_1, \ldots ,y_l)$ are tuples of variables, then by $xy$ we denote the tuple $(x_1, \ldots ,x_k,y_1, \ldots ,y_l)$. If $A$ and $B$ are sets of tuples, then ${A}^{\frown} B$ denotes the set $\{ab \mid a \in A$ and $b \in B\}$.

\begin{maa}\label{def1}
Formulas of $\I$ are defined recursively as follows:
\begin{enumerate}
\item If $\phi$ is a first-order literal, then $\phi \in \I$.

\item If $t_1$, $t_2$ and $t_3$ are finite (or empty) tuples of terms, then $t_1 \bot_{t_3} t_2 \in \I$.

\item If $\phi, \psi \in \I$, then $\phi \tai \psi \in \I$ and $\phi \ja \psi \in \I$.

\item If $\phi \in \I$ and $x$ is a variable, then $\on x \phi \in \I$ and $\forall x \phi \in \I$.
\end{enumerate}
\end{maa}

Hence we allow negation only in front of first-order atoms. Also notice that in the independence atom, we allow any $t_i$ to be empty. In the case of $t_3 = \emptyset$, $t_1 \bot_{\emptyset} t_2$ is denoted by $t_1 \bot t_2$.

In order to define the semantics of $\I$, we first need to define the concept of a team. Let $M$ be a model. An assignment $s$ of $M$ is a finite mapping from a set of variables to the domain of $M$. (In this text $M$ can refer either to the model itself or its domain. It will be always clear from the context which one is under consideration.) Let $\{x_1, \ldots ,x_k\}$ be a set of variables. A team $X$ of $M$ with $\Dom (X)=\{x_1, \ldots ,x_k\}$ is a set of assignments $s$ of $M$ with $\Dom (s) = \{x_1, \ldots ,x_k\}$. The value of a term $t$ in an assignment $s$ is denoted by $t^M\langle s\rangle$. If $t=(t_1, \ldots ,t_l)$ where $t_i$ is a term, then by $t^M\langle s\rangle$ we denote $(t_1^M\langle s\rangle , \ldots ,t_l^M\langle s\rangle)$. By $s(a/x)$, for a variable $x$ and $a \in M$, we denote the assignment which (with domain $\Dom (s) \cup \{x\}$) agrees with $s$ everywhere except that it maps $x$ to $a$. Then by $X(M/x)$ we denote the duplicated team $\{s(a/x) \mid s \in X$ and $a\in M\}$. If $F:X \rightarrow \Po (M)$, then $X(F/x)$ denotes to the supplemented team $\{s(a/x) \mid s\in X$ and $a \in F(s)\}$. Note that it can be the case that $X(M/x)=X(F/x)$.

The set $\Fr (\phi)$ of free variables of a formula $\phi \in \I$ is defined as for first-order logic, except that we now have the new case
$$\Fr (t_1 \bot_{t_3} t_2) = \Var (t_1) \cup \Var (t_2) \cup \Var (t_3)$$
where $\Var (t_i)$ is the set of variables occurring in the term tuple $t_i$. If $\Fr (\phi) =\emptyset$, then we call $\phi$ a sentence.

Now we are ready to define the semantics of $\I$. In the definition, $M \models_s \phi$ refers to the Tarskian satisfaction relation of first-order logic.

\begin{maa}\label{def2}
Let $M$ be a model, $\phi \in \I$ and $X$ a team of $M$ such that $\Fr (\phi) \subseteq X$. The satisfaction relation $M \models_X \phi$  is defined as follows:
\begin{enumerate}
\item If $\phi$ is a first-order literal, then $M \models_X \phi$ iff $M \models_s \phi$ for all $s \in X$.
\item If $\phi = t_1 \bot_{t_3} t_2$, then $M \models_X \phi$ iff for all $s,s' \in X$ with $t_3^M \langle s\rangle = t_3^M \langle s'\rangle$, there is some $s'' \in X$ such that $t_1^M\langle s'' \rangle t_3^M \langle s''\rangle = t_1^M\langle s \rangle t_3^M \langle s\rangle$ and $t_2^M \langle s'' \rangle = t_2^M \langle s' \rangle$.

\item If $\phi = \psi \tai \theta$, then $M \models_X \phi$ iff $M \models_Y \psi$ and $M \models_Z \theta$ for some $Y,Z \subseteq X$, $Y\cup Z=X$. 

\item If $\phi = \psi \ja \theta$, then $M \models_X \phi$ iff $M \models_X \psi$ and $M \models_X \theta$.

\item If $\phi = \exists x \psi$, then $M \models_X \phi$ iff $M \models_{X(F/x)} \psi$ for some $F: X \rightarrow \Po (M)\setminus\{\emptyset\}$.

\item If $\phi = \forall x \psi$, then $M \models_X \phi$ iff $M \models_{X(M/x)} \psi$.
\end{enumerate}
\end{maa}

In the case of $t = \emptyset$ occurring in an independence atom, we let $t^M\langle s\rangle = t^M \langle s'\rangle$ for every $s,s' \in X$. Therefore,

$$M \models_X \emptyset \bot_{t_3} t_2 \textrm{ and } M \models_X t_1 \bot_{t_3} \emptyset \textrm{ for all } X.$$

If we are verifying a formula of the form $\forall x_1 \ldots \forall x_k \phi$, then instead of writing $X(M/x_1)\ldots(M/x_k)$, we will often use the abbreviation $X(M^k/x_1\ldots x_k)$. Also for verifying a formula of the form $\on x_1 \ldots \on x_k \phi$, we have to find witnessing functions $F_1: X \rightarrow \Po(M)\setminus\{\emptyset\},\ldots ,F_k:X(F_1/x_1)\ldots (F_{k-1}/x_{k-1}) \rightarrow \Po(M)\setminus\{\emptyset\}$ such that
$$M \models_{X(F_1/x_1)\ldots (F/x_k)} \phi.$$ 
Clearly in this case it is equivalent to find a single function $F: X \rightarrow \Po(M^k)\setminus\{\emptyset\}$ such that 
$$M \models_{X(F/x_1\ldots x_k)} \phi.$$

An immediate consequence of Definition \ref{def2} is that first-order formulas are flat in the following sense (the proof  is a straightforward structural induction).

\begin{prop}\label{flat}
Let $M$ be a model, $\phi$ a first-order formula and $X$ a team of $M$ such that $\Fr (\phi) \subseteq X$. Then the following are equivalent:
\begin{itemize}
\item $M \models_X \phi$,
\item $M \models_{\{s\}} \phi$ for all $s \in X$,
\item $M \models_s \phi$ for all $s \in X$.
\end{itemize}
\end{prop}

In addition to independence atoms, there are also many other type of atomic formulas that are relevant in team semantics setting. Dependence atom was already introduced but also inclusion and exclusion atoms will be useful for our purposes. The syntax of these atoms is the following:

\begin{itemize}
\item dependence: $\dep (t_1, \ldots ,t_n)$, where $t_1, \ldots ,t_n$ is a term.
\item inclusion: $t_1 \subseteq t_2$, where $t_1$ and $t_2$ are tuples of terms of the same length.
\item exclusion: $t_1 \mid t_2$,  where $t_1$ and $t_2$ are tuples of terms of the same length.
\end{itemize}

The semantics of these atoms is defined as:

\begin{itemize}
\item dependence: $M \models_X \dep(t_1, \ldots ,t_n)$ iff for all $s,s'\in X$ with $t_1^M \langle s\rangle =t_1^M \langle s'\rangle, \ldots ,t_{n-1}^M \langle s\rangle = t_{n-1}^M \langle s'\rangle$, it holds that $t_n^M\langle s\rangle =t_n^M \langle s'\rangle$.

\item inclusion:  $M \models_X t_1 \subseteq t_2$ iff for every $s\in X$, there is $s'\in X$ such that $t_1^M\langle s\rangle =t_2^M\langle s'\rangle$.

\item exclusion:  $M \models_X t_1 \mid t_2$ iff for every $s,s'\in X$ , $t_1^M\langle s\rangle \neq t_2^M\langle s'\rangle$.
\end{itemize}

If we replace independence atom with one these atoms in Definition \ref{def1}, then the resulting logic is called dependence logic, inclusion logic or exclusion logic.

Consider first dependence logic. Dependence atom $\dep (t_1 , \ldots ,t_n)$ express functional dependence between $t_n$ and the tuple $t_1 \ldots t_{n-1}$, and it can be expressed in independence logic as
$$t_n \bot_{t_1\ldots t_{n-1}} t_n.$$
For the other direction, there is no translation of independence atom in dependence logic. Dependence logic is downwards closed (meaning that $M \models_Y \phi$ whenever $M \models_X \phi$ and $Y \subseteq X$) whereas independence logic is not. Consider for example independence atom $x \bot y$. This atom is true for the team
\begin{equation}\label{X}\begin{array}{c|c|c}
   &x  &y\\
   \hline
s_0&0  &0\\
\hline
s_1&0  &1\\
\hline
s_2&1  &0\\
\hline
s_3&1  &1
\end{array}
\end{equation}
but not for the team
\begin{equation}\label{X'}
\begin{array}{c|c|c}
   &x   &y\\
   \hline
s_0&0  &0\\
\hline
s_1&0  &1\\
\hline
s_2&1  &0
\end{array}
\end{equation}
Thus it cannot be expressed in dependence logic, and hence independence logic is expressively strictly stronger on the level of formulas. On the level of sentences though, these logics coincide \cite{erikjouko}.

One should also mention that the semantics of all the other dependence logic formulas are not normally defined entirely the same way as we did in Definition \ref{def2} for independence logic. There is usually one exception concerning existential formulas. For $\on x \phi$, it is usually required that each value of the function $F: X \rightarrow \Po (M)\setminus\{\emptyset\}$ is singleton. Still, dependence logic is downwards closed with both semantics, and thus with Axiom of Choice, these two definitions coincide.

A direct consequence of the example above is that we cannot adopt the rule $\forall x \phi \vdash \phi$ into our inference system because it is not sound for independence logic. If $M$ is a model with domain $\{0,1\}$, and $X$ is the team (\ref{X'}), then $X(M/x)$ is the team (\ref{X}), and thus $M \models_X \forall x x \bot y$ and $M \not\models_X x\bot y$.

Consider then inclusion and exclusion atoms. Galliani has showed that inclusion/exclusion logic (first-order logic added with inclusion and exclusion atoms) is translatable to independence logic and vice versa \cite{pietro}. There the following independence logic translation of inclusion atom was presented.

\begin{prop}[\cite{pietro}]\label{pietron}
Let $t_1$ and $t_2$ be tuples of terms of the same
length. Then the inclusion atom $t_1 \subseteq t_2$ is equivalent to the independence formula
$$\forall v_1\forall v_2\forall z((\neg z = t_1\ja \neg z = t_2)\tai(\neg v_1 = v_2\ja \neg z = t_2)\tai((v_1 = v_2\tai z = t_2)\ja z \bot v_1v_2))$$
where $v_1$ and $v_2$ are variables and $z$ is a variable tuple of the same length than $t_i$, and none of the variables in $v_1v_2z$ occur in $t_1t_2$.
\end{prop}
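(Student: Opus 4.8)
The plan is to prove both directions directly from Definition \ref{def2}. Write $\Phi$ for the displayed formula and $\alpha,\beta,\gamma$ for its three disjuncts, so that $\gamma=(v_1=v_2\tai z=t_2)\ja z\bot v_1v_2$, and let $k$ be the common length of $t_1,t_2,z$. Fix $M$ and a team $X$ with $\Fr(t_1\subseteq t_2)\subseteq\Dom(X)$ (we may assume $\Dom(X)$ avoids $v_1,v_2$ and the components of $z$), and set $Y:=X(M/v_1)(M/v_2)(M^k/z)$. Since $v_1,v_2,z$ do not occur in $t_1,t_2$, the values $t_1^M\langle s\rangle,t_2^M\langle s\rangle$ for $s\in Y$ depend only on $s\restriction\Dom(X)\in X$. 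As the outermost connectives of $\Phi$ are universal quantifiers over a twofold disjunction, $M\models_X\Phi$ is equivalent to the existence of a covering $Y=Y_\alpha\cup Y_\beta\cup Y_\gamma$ with $M\models_{Y_\alpha}\alpha$, $M\models_{Y_\beta}\beta$, $M\models_{Y_\gamma}\gamma$, and since $\alpha,\beta$ are first-order they must then hold assignment-by-assignment on $Y_\alpha,Y_\beta$. The degenerate cases $X=\emptyset$ and $|M|=1$ should be disposed of at the start: in each, $t_1\subseteq t_2$ holds trivially and so does $\Phi$ (take $Y_\gamma=Y$).

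For ``$t_1\subseteq t_2\Rightarrow\Phi$'': assuming $M\models_X t_1\subseteq t_2$ and abbreviating $a=t_1^M\langle s\rangle$, $b=t_2^M\langle s\rangle$, $c=z^M\langle s\rangle$, $d_i=v_i^M\langle s\rangle$ for $s\in Y$, I would take
$$Y_\alpha=\{s:c\neq a\text{ and }c\neq b\},\quad Y_\gamma=\{s:c=b\}\cup\{s:c=a\text{ and }d_1=d_2\},\quad Y_\beta=Y\setminus(Y_\alpha\cup Y_\gamma).$$
Then $Y_\beta=\{s:c=a\neq b\text{ and }d_1\neq d_2\}$, so $M\models_{Y_\alpha}\alpha$ and $M\models_{Y_\beta}\beta$ hold by inspection. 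For $M\models_{Y_\gamma}\gamma$: the first conjunct follows by splitting $Y_\gamma$ into $\{s\in Y_\gamma:d_1=d_2\}$ (satisfying $v_1=v_2$) and $\{s\in Y_\gamma:c=b\}$ (satisfying $z=t_2$); for the second conjunct $z\bot v_1v_2$, given $p,q\in Y_\gamma$, observe that the $z$-value of $p$ equals $t_2^M\langle s_0\rangle$ for some $s_0\in X$ (immediately if $p$ is in the first block of $Y_\gamma$, and via $t_1\subseteq t_2$ if in the second), so the assignment extending $s_0$ that carries $z$ to that value and $v_1v_2$ to the $(v_1,v_2)$-value of $q$ lies in $Y_\gamma$ and is the required witness.

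For ``$\Phi\Rightarrow t_1\subseteq t_2$'': fix a covering as above, pick $s_0\in X$, put $a=t_1^M\langle s_0\rangle$, and aim to produce $s_0'\in X$ with $t_2^M\langle s_0'\rangle=a$. The key is a pair of forced-membership facts. (i) Any $s\in Y$ whose $z$-value equals its $t_1$-value and with $v_1^M\langle s\rangle=v_2^M\langle s\rangle$ lies in neither $Y_\alpha$ (which forces $z\neq t_1$) nor $Y_\beta$ (which forces $v_1\neq v_2$), hence lies in $Y_\gamma$; so, extending $s_0$ by $a$ on $z$ and a constant on $v_1v_2$, $Y_\gamma$ contains an assignment of $z$-value $a$. (ii) Any $s\in Y$ whose $z$-value equals its $t_2$-value likewise lies in $Y_\gamma$, so (as $X\neq\emptyset$) every pair in $M^2$ occurs as a $(v_1,v_2)$-value in $Y_\gamma$. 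Now apply $M\models_{Y_\gamma}z\bot v_1v_2$ to an assignment of $z$-value $a$ and one of $(v_1,v_2)$-value $(d_1,d_2)$ with $d_1\neq d_2$ (possible since $|M|\geq2$): this gives $r\in Y_\gamma$ with $z^M\langle r\rangle=a$ and $v_1^M\langle r\rangle\neq v_2^M\langle r\rangle$. Since $M\models_{Y_\gamma}v_1=v_2\tai z=t_2$ splits $Y_\gamma$ into a part on which $v_1=v_2$ and a part on which $z=t_2$, and $r$ cannot lie in the former, we get $a=z^M\langle r\rangle=t_2^M\langle r\rangle$, so $s_0':=r\restriction\Dom(X)\in X$ works.

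I expect the backward direction to be the main obstacle, specifically the design insight behind $\Phi$: one must recognize how the fresh variables $v_1,v_2$ together with the clause $v_1=v_2\tai z=t_2$ allow one to \emph{force} an assignment with $z$-value $t_1^M\langle s_0\rangle$ but $v_1\neq v_2$ into the ``$z=t_2$'' component of $Y_\gamma$, exploiting the independence atom $z\bot v_1v_2$ and the two guaranteed-membership facts (i) and (ii). The forward direction is then mostly a verification that the explicit covering above works, and the clause that $v_1,v_2,z$ avoid $t_1,t_2$ enters only through the bookkeeping that the quantifications leave $t_1$- and $t_2$-values untouched.
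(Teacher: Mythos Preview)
Your argument is correct. Note, however, that the paper does not supply its own proof of this proposition: it is quoted as a result from \cite{pietro} and left unproved, so there is nothing to compare your approach against within this paper. Your forward direction via the explicit covering $Y_\alpha,Y_\beta,Y_\gamma$ and your backward direction via the two forced-membership observations (i), (ii) and the independence atom are exactly the intended mechanism behind Galliani's encoding, and the degenerate cases $X=\emptyset$ and $|M|=1$ are handled cleanly. One small remark: when you write ``we may assume $\Dom(X)$ avoids $v_1,v_2$ and the components of $z$'' you are implicitly invoking locality (Proposition on Locality in Section~\ref{pre}); it would be worth saying so explicitly, since the hypothesis of the proposition only guarantees that $v_1,v_2,z$ avoid the terms $t_1,t_2$, not the domain of $X$.
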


We will use dependence and inclusion atoms in our deduction system, and there every such an occurrence should be understood as an independence logic translation of the form introduced here.

Before going to the proof, one important result need yet to be introduced.

\begin{maa}
Let $T$ be a set  of formulas of independence logic with only finitely many free variables. The formula $\phi$ is a logical consequence of $T$,
$$T \models \phi,$$
if for all models $M$ and teams $X$, with $\Fr (\phi) \cup \bigcup_{\psi \in T} \Fr (\psi) \subseteq \Dom (X)$, and \\$M \models_X T$, we have $M \models_X \phi$. The formulas $\phi$ and $\psi$ are logically equivalent,
$$\phi \equiv \psi,$$
if $\phi \models \psi$ and $\psi \models \phi$.
\end{maa}

Let $X$ be a team with domain $\{x_1, \ldots ,x_k\}$ and $V \subseteq \{x_1, \ldots ,x_k\}$. Then by $X \upharpoonright V$ we denote the team $\{s \upharpoonright V \mid s \in X\}$. If $u$ is a tuple of variables such that $\Var (u)=V$, then by $X \upharpoonright u$ we denote the team $X \upharpoonright V$. The following result is important \cite{pietro}.

\begin{prop}[Locality]\label{loc}
Suppose $V \supseteq \Fr (\phi)$. Then $M \models_X \phi$ iff $M \models_{X\upharpoonright V} \phi$.
\end{prop}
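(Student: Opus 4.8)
The plan is to prove the biconditional by induction on the structure of $\phi$, simultaneously for every model $M$, every team $X$ and every $V$ with $\Fr(\phi)\subseteq V\subseteq\Dom(X)$ (the inclusion $V\subseteq\Dom(X)$ being already implicit in the meaning of $X\upharpoonright V$). First I would record the elementary facts that restriction commutes with the team operations occurring in Definition \ref{def2}: $(Y\cup Z)\upharpoonright V=(Y\upharpoonright V)\cup(Z\upharpoonright V)$, and, setting $W=V\cup\{x\}$, that $s(a/x)\upharpoonright W=(s\upharpoonright V)(a/x)$ whenever $V\subseteq\Dom(s)$, from which
$$X(M/x)\upharpoonright W=(X\upharpoonright V)(M/x),\qquad X(F/x)\upharpoonright W=(X\upharpoonright V)(F'/x)$$
hold, the latter for a suitably chosen $F'$ described below. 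For the base cases: if $\phi\in$ FO then $M\models_s\phi$ is determined by $s\upharpoonright\Fr(\phi)$, hence by $s\upharpoonright V$ since $\Fr(\phi)\subseteq V$, and as every member of $X\upharpoonright V$ has the form $s\upharpoonright V$ with $s\in X$ the two satisfaction conditions coincide; if $\phi=t_1\bot_{t_3}t_2$ then each $t_i^M\langle s\rangle=t_i^M\langle s\upharpoonright V\rangle$, and the defining "$\forall s,s'\,\exists s''$" condition transfers between $X$ and $X\upharpoonright V$ by sending a chosen witness $s''\in X$ to $s''\upharpoonright V$ in one direction and lifting a chosen $u''\in X\upharpoonright V$ to any $s''\in X$ with $s''\upharpoonright V=u''$ in the other. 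The case $\phi=\psi\ja\theta$ is immediate from the induction hypothesis, since $\Fr(\psi),\Fr(\theta)\subseteq V$.

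For $\phi=\psi\tai\theta$ I would argue as follows. From a split $X=Y\cup Z$ witnessing $M\models_X\phi$, pass to the split $X\upharpoonright V=(Y\upharpoonright V)\cup(Z\upharpoonright V)$ and apply the induction hypothesis to $Y$ and $Z$ (legitimate as $\Fr(\psi),\Fr(\theta)\subseteq V$). Conversely, from a split $X\upharpoonright V=Y'\cup Z'$ pull back by $Y=\{s\in X:s\upharpoonright V\in Y'\}$ and $Z=\{s\in X:s\upharpoonright V\in Z'\}$; then $Y\cup Z=X$, $Y\upharpoonright V=Y'$ and $Z\upharpoonright V=Z'$, and the induction hypothesis again gives the conclusion. (Throughout I use the usual convention that the empty team satisfies every formula, so degenerate subteams are harmless.)

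The quantifier cases are the substance of the argument and the one place where LAX semantics is essential. For $\phi=\forall x\psi$ we have $W=V\cup\{x\}\supseteq\Fr(\psi)$ and $W\subseteq\Dom\big(X(M/x)\big)$, so the induction hypothesis for $\psi$ with the set $W$, together with $X(M/x)\upharpoonright W=(X\upharpoonright V)(M/x)$, gives $M\models_X\forall x\psi$ iff $M\models_{(X\upharpoonright V)(M/x)}\psi$ iff $M\models_{X\upharpoonright V}\forall x\psi$. For $\phi=\exists x\psi$, in the forward direction I would take $F:X\to\Po(M)$ with $M\models_{X(F/x)}\psi$ and set $F'(t)=\bigcup\{F(s):s\in X,\ s\upharpoonright V=t\}$ for $t\in X\upharpoonright V$; one checks $X(F/x)\upharpoonright W=(X\upharpoonright V)(F'/x)$, so the induction hypothesis for $\psi$ with $W$ yields $M\models_{(X\upharpoonright V)(F'/x)}\psi$, i.e.\ $M\models_{X\upharpoonright V}\exists x\psi$. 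In the backward direction, from a witness $F':X\upharpoonright V\to\Po(M)$ I would set $F(s)=F'(s\upharpoonright V)$, again obtaining $X(F/x)\upharpoonright W=(X\upharpoonright V)(F'/x)$ and concluding by the induction hypothesis. Note this treatment is uniform in whether or not $x$ already belongs to $V$, and since $V$ ranges over all sets with $\Fr(\phi)\subseteq V\subseteq\Dom(X)$, the full statement comes out of the induction directly (no separate "one variable at a time" step is needed).

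The main obstacle is precisely this existential case: one must arrange the bookkeeping so that, after restricting to $W$, the supplemented team $(X\upharpoonright V)(F'/x)$ coincides with $X(F/x)$, and this forces the "union over the fibre of $s\mapsto s\upharpoonright V$" definition of $F'$ — a choice that is legitimate only because function values are permitted to be arbitrary subsets of $M$. Under the strict semantics (singleton values) this union need not be a legal witness, which is exactly why LAX semantics is adopted here. Everything else reduces to the observations that restriction commutes with unions, with passing to subteams, and with the duplication and supplementation operations, and that atomic formulas see an assignment only through the finitely many coordinates naming their free variables.
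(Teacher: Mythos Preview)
The paper does not give its own proof of this proposition; it merely states the result and cites Galliani \cite{pietro}. Your inductive argument is correct and is exactly the standard proof: Galliani's original argument proceeds by the same structural induction, and the crucial point---that in the existential case one may set $F'(t)=\bigcup\{F(s):s\upharpoonright V=t\}$ precisely because LAX semantics allows set-valued witnesses---is also the reason the paper adopts LAX semantics (as remarked at the end of Section~\ref{pre}). There is nothing to correct.
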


For a logic in team semantics setting, this is not an obvious fact. IF logic lacks this property, and the same holds for independence logic if the semantics of $\on x \phi$ is defined in the standard dependence logic way (requiring that the witnessing $F$ maps the assignments of $X$ to singletons of $\Po(M)$).

\section{A system of natural deduction}\label{rulesit}

In this section we introduce inference rules that allow us to derive all the first-order consequences of sentences of independence logic. Many of the rules below are just the same than the dependence logic rules introduced in \cite{juhajouko}. Still some major differences occur in this system partly due the semantic differences between independence and dependence atomic formulas and partly due the fact that independence logic is not downwards closed.

The rules we are about to adopt are listed below in Figure \ref{ded}, Figure \ref{dod} and Definition \ref{rules}. Figure \ref{ded} presents the usual inference rules of first-order logic with some restrictions, and Figure \ref{dod} presentes rules for inclusion atoms which are here thought of as independence logic translations of the form given in Proposition \ref{pietron}. If $A$ is a formula, $t=(t_1, \ldots ,t_n)$ is a tuple of terms and $x = (x_1, \ldots x_n)$ is  a tuple of variables, then $A(t/x)$ denotes the formula $A$ where all the free occurrences of $x_i$ are replaced by $t_i$. When using this notation we presume that no variable in $t_i$ becomes bound in the substitution.

\begin{figure}
\center\begin{tabular}{|c|c|c|}\hline
&& \\
{\small
Operation} & {\small Introduction} & {\small Elimination
} \\
&& \\
 \hline
&&\\
Conjunction
&
$
\infer[{\mbox{\tiny$\wedge$ I}}]{A \wedge B}{A & B}$
&
$\infer[{\mbox{\tiny$\wedge$ E}}]A{A\wedge B}
\qquad
\infer[{\mbox{\tiny$\wedge$ E}}]B{A\wedge B}
$\\&&\\
\hline
&&\\
Dis\-junc\-tion
&
$
\infer[{\mbox{\tiny$\vee$ I}}]{A \vee B}{A}
\qquad
\infer[{\mbox{\tiny$\vee$ I}}]{A \vee B}{B}$
&
$\infer[{\mbox{\tiny$\vee$ E}}]{C}{
       A\vee B
       &
                                   \infer*{C}{
               [A]
       }
       &
      \infer*{C}{
               [B]
       }
       }
$%\\&&\\ \hline

\\&&\\
&%\(t\) mik\"a ta\-han\-sa ter\-mi
&Condition 1.\\
&&\\
\hline

&&\\
Ne\-ga\-tion 
 
&
$
\infer[{\mbox{\tiny$\neg$ I}}]{\neg A}{
      \infer*{B\wedge\neg B}{
               [A]
       }
       }
$
&

$
\infer[{\mbox{\tiny$\neg$ E}}]{A}{\neg\neg A}
$
\\&&\\

&Condition 2.
&Condition 3. 
\\
&&\\
\hline

&&\\
Uni\-ver\-sal quantifier
&
$
\infer[{\mbox{\tiny$\forall$ I}}]{\forall x_iA}{
      A
       }

$
&
$
\infer[{\mbox{\tiny$\forall$ E}}]{A(t/x_i)}{\forall x_i A}
$
\\&&\\
&Condition 4.
&Condition 3.%\(t\) mik\"a ta\-han\-sa ter\-mi.
\\
&&\\
\hline
&&\\
Existential quantifier
&
$
\infer[{\mbox{\tiny$\exists$ I}}]{\exists x_iA}{
      A(t/x_i)
       }

$
&
$
\infer[{\mbox{\tiny$\exists$ E}}]{B}{
       \exists x_iA
       &
                                   \infer*{B}{
               [A]
       }
       }$
\\&&\\
&%\(t\) mik\"a ta\-han\-sa ter\-mi
&Condition 5.\\
&&\\
\hline
\multicolumn{3}{|l|}{  }\\
\multicolumn{3}{|l|}{\ Condition 1. $C$ and any non-discharged formula used in a derivation of $C$ must be
}\\
\multicolumn{3}{|l|}{\ first-order.}\\

\multicolumn{3}{|l|}{\ Condition 2. $A,B$, and any non-discharged formula used in the derivation of $B\ja \neg B$
}\\
\multicolumn{3}{|l|}{\ must be first-order.}\\
\multicolumn{3}{|l|}{\ Condition 3. $A$ is first-order.
}\\
\multicolumn{3}{|l|}{\ Condition 4. The variable \(x_i\) cannot appear free  in any non-discharged assumption
}\\
\multicolumn{3}{|l|}{\ used in the derivation of \(A\).}\\
 \multicolumn{3}{|l|}{\ Condition 5. The variable
 \(x_i\) cannot appear free  in \(B\) and in any non-discharged  
}\\
\multicolumn{3}{|l|}{\ assumption used in the derivation of  \(B\),
except in \(A\).}\\
\multicolumn{3}{|l|}{  }\\
\hline
\end{tabular}
\caption{The first set of rules.\label{ded}}
\end{figure}

\begin{figure}
\center
\begin{tabular}{|C{3.7cm}|C{3.7cm}|C{3.7cm}|}\hline

&&\\
&&\\ 
 $\infer{t \sub t}{}$&\infer{t_0 \sub t_2}{t_0 \sub t_1 \quad t_1 \sub t_2}& \infer{t_{i_1}\ldots t_{i_l} \sub t'_{i_1}\ldots t'_{i_l}}{t_1 \ldots t_n \sub t'_1 \ldots t'_n}\\
&&\\ 

&&Condition 5. \\
&&\\
Reflexivity& Transitivity &Projection and permutation\\\hline
\multicolumn{3}{|l|}{  }\\
\multicolumn{3}{|l|}{\ Note: $t,t_1,t'_1 \ldots ,t_n,t'_n$ are tuples of terms.
}\\
\multicolumn{3}{|l|}{\ Condition 5. The indices $i_1, \ldots ,i_l$ are from $\{1, \ldots ,n\}$.
}\\\hline

\end{tabular}
\caption{The second set of rules.\label{dod}}
\end{figure}

\begin{maa}\label{rules}
\begin{enumerate}
\item\label{rule1} Disjunction substitution: %If $\psi \vdashd \theta$, then $\phi\vee \psi \vdashd \phi \vee \theta$
%\[\infer{\phi \vee \theta}{\psi\vdashd \theta,\ \phi\vee \psi}    \]
\[
\infer[]{A\vee C}{
        A\vee B
       &
                                   \infer*{C}{
               [B]
       }
       }\]
where the prerequisite for applying this rule is that any non-discharged assumption used in the derivation of $C$ must be a first-order formula.

\item\label{rule2}  Commutation and associativity of disjunction: 
\[\infer{A\vee B}{B\vee A}\hspace{19mm}\infer{A\vee(B\vee C)}{(A\vee B)\vee C}\]

\item\label{rule3} Extending scope: 
\[\infer{ \forall x ((A \ja x \bot y) \vee B)}{\forall x A \vee B}   \]
 where $y$ is a tuple listing the variables in $\Fr (A \tai B)-\{x\}$ and the prerequisite for applying this rule is that $x$ does not appear free in $B$.

\item\label{rule4} Extending scope:
\[\infer{ \on x (A \vee B)}{\on x A \vee B}   \]
 where the  prerequisite for applying this rule is that $x$ does not appear free in $B$.

\item\label{rule5} Universal substitution: \[
\infer[]{\forall y B}{\forall x A        
       &
                                   \infer*{B}{A(y/x)              
       }
       }\]
where the prerequisite for applying this rule is that $y$ does not appear free in $\forall x A$ and in any non-discharged assumption used in the derivation of $B$, except in $A(y/x)$.
 
\item\label{rule6} Independence distribution: Let
\begin{equation}
A = \on x_0 (\bigwedge_{1 \leq i \leq m} u_i \bot_{w_i} v_i \ja C)
\end{equation}
and
\begin{equation}
B = \on x_1 (\bigwedge_{m+1 \leq i \leq m + n} u_i \bot_{w_i} v_i \ja D)
\end{equation}
be formulas where $x_0$ is a tuple of variables that do not appear in $B$; $x_1$ is a tuple of variables that do not appear in $A$; $u_i$, $v_i$ and $w_i$ are tuples of bound variables; $C$ and $D$ are first-order formulas. 

Let
\begin{align*}
E=&\forall \a \forall \b \on x_0\on x_1 \on z_0\on z_1\on r [\bigwedge_{1 \leq i \leq m+n}  u_i \bot_{w_i r} v_i \ja \bigwedge_{i = 0,1}  \dep(z_i) \ja \\
&(\neg  z_0 = z_1 \tai \a = \b) \ja ((C  \ja r = z_0) \tai (D \ja r = z_1))]
\end{align*}
where $\a$, $\b$, $z_0$, $z_1$ and $r$ are variables that do not appear in formula $A \tai B$.
Then
\[\infer{E}
{A \tai B}\]
Note that the logical form of this rule is 
\[\infer{\begin{array}{l}
\forall \a \forall \b \on x_0\on x_1 \on z_0\on z_1\on r [\bigwedge_{1 \leq i \leq m+n}  u_i \bot_{w_i r} v_i \ja \bigwedge_{i = 0,1}  \dep(z_i) \ja \\
(\neg  z_0 = z_1 \tai \a = \b) \ja ((C  \ja r = z_0) \tai (D \ja r = z_1))]
\end{array}
}
{\on x_0 (\bigwedge_{1 \leq i \leq m} u_i \bot_{w_i} v_i \ja C) \tai \on x_1 (\bigwedge_{m+1 \leq i \leq m + n} u_i \bot_{w_i} v_i \ja D)}\]

\item\label{rule7} Independence introduction:
\[\infer{\forall y \on x (A \ja x \bot_z y)}{\on x \forall y A}\]
where $z$ is a tuple listing the variables in $\Fr (A)-\{x,y\}$.

\item\label{rule8} Inclusion compression:
\[\infer{ A(y/x)}{y \sub x \quad A}   \]
 where $x$ and $y$ are tuples of distinct variables, $x$ lists $\Fr(A)$, and the prerequisite for applying this rule is that $A$ is first-order.

\item\label{rule9} Independence elimination: \[
\infer[]{ \on x_2( x_2 \sub x \ja (A \tai u_0v_1w_0 = u_2v_2w_2)) }{\indep{w}{u}{v}  \quad x_0 \sub x \quad x_1 \sub x }\]
where  
$A=  \left\{\begin{array}{l l}
    \perp & \quad \textrm{if $w$ is empty,}\\
    \neg w_0 = w_1& \quad \textrm{otherwise,}\\
  \end{array}\right.$\\

and $u$, $v$ and $w$ are tuples of variables from $x$, and $z_i = z(x_i/x)$, for $z \in \{u,v,w\}$ and $i \in \{0,1,2\}$.

\item\label{rule10} Tuple introduction: \[
\infer[]{ \begin{array}{l}\forall x \on y \big ( B\ja \\
\forall u \on v \on x' \on y' (A(uv/xy) \ja x'y'=xy \ja x'y' \sub uv)\big ) \end{array}}{\forall x \on y (A \ja B)} \]
where $xyuvx'y'$ is a tuple of distinct variables such that $x,x',u$ are of the same length, $y,y',v$ are of the same lenght, and $xy$ lists $\Fr(A)$.

\item\label{rule11} Identity axiom:
If $x$ is a variable, then $x=x$ is an axiom.

\item\label{rule12} Identity rule:
If $x$ and $y$ are variables, then we let
\[\infer{y=x}{x=y}\]

\item\label{rule13} Identity rule:
If $t$ is a term and $x$ and $y$ are variables, then we let
\[\infer{t(x/y)=t}{x=y}\]

\item\label{rule14} Identity rule: 
If $A$ is a formula and $x$ and $y$ are variables, then we let
\[\infer{A(x/y)}{A \ja x=y}\]
\end{enumerate}
\end{maa} 

Disjunction elimination rule is not sound for independence logic, so we introduce rules 1-4 for disjunction. %These rules are all derivable in first-order logic. 
Also similar rules for conjunction are easily derivable in this system with an exception that we can derive the correspondent of rule 3 without this new independence atom $x \bot y$ occurring in the derived formula. As mentioned before, universal elimination rule does not hold for independence logic, so we introduce rule 5 here which is also derivable in first-order logic. Rules 3, 4, 6 and 7 preserve logical equivalence. Also note that rule \ref{rule9} is analogous to the chase rule of independence and inclusion atoms  in \cite{hankon14}.

\section{The Soundness Theorem}\label{sound}

In this section we will show that the previous system of natural deduction is sound. First we prove that rules 3, 4, 6 and 7 (plus the conjunctive versions of rules 3 and 4 which are denoted by 3' and 4') preserve logical equivalence.

\begin{lem}[Rules 3, 4, 7 and the conjunctive versions 3' and 4']\label{saannot} The following equivalences hold for formulas of independence logic:
\begin{enumerate}
\item[(3)] $\forall x ((\varphi \ja y \bot x) \tai \psi) \equiv \forall x \varphi \tai \psi$ if $x$ does not occur free in $\psi$ and $y$ is a tuple listing all the variables in $\Fr (\varphi \tai \psi) - \{x\}$.
\item[(3')] $\forall x (\varphi \ja \psi) \equiv \forall x \varphi \ja \psi$ if $x$ does not occur free in $\psi$.
\item[(4)] $\on x(\varphi \tai \psi) \equiv \on x \varphi \tai \psi$ if $x$ does not occur free in $\psi$.
\item[(4')] $\on x(\varphi \ja \psi) \equiv \on x \varphi \ja \psi$ if $x$ does not occur free in $\psi$.
\item[(7)] $\forall x \on y ( \phi \ja x \bot_{z} y) \equiv \on y \forall x \phi$ if $z$ is a tuple listing all the variables in $\Fr (\phi) - \{x,y\}$.
\end{enumerate}
\end{lem}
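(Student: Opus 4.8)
The plan is to prove each of the five equivalences separately by unwinding Definition~\ref{def2} and using the Locality proposition. For each equivalence I will show both inclusions $\models$ of the defining $\equiv$, i.e. fix an arbitrary model $M$ and team $X$ with the relevant free variables in $\Dom(X)$, and argue that $M \models_X (\text{left side})$ iff $M \models_X (\text{right side})$. The recurring technical device is this: whenever $x$ does not occur free in $\psi$, the truth of $\psi$ on a team $Y$ with $x \in \Dom(Y)$ depends only on $Y \upharpoonright (\Dom(Y) - \{x\})$ by Locality, so duplicating or supplementing the $x$-column cannot affect $\psi$; and in the other direction, a team over the smaller domain can be ``lifted'' to one over the larger domain without disturbing $\psi$.

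For (3') and (4'), which are the warm-ups, I would argue as follows. For (3'): $M\models_X \forall x(\varphi\ja\psi)$ iff $M\models_{X(M/x)}\varphi$ and $M\models_{X(M/x)}\psi$; by Locality the second conjunct is equivalent to $M\models_{X}\psi$ (since $x\notin\Fr(\psi)$ and $X(M/x)\upharpoonright(\Dom(X)-\{x\})= X\upharpoonright(\Dom(X)-\{x\})$ when $x\notin\Dom(X)$, or more carefully $X(M/x)\upharpoonright V = X\upharpoonright V$ for $V=\Fr(\psi)$), so the whole thing is equivalent to $M\models_X\forall x\varphi$ and $M\models_X\psi$, i.e. to $M\models_X\forall x\varphi\ja\psi$. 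For (4'): from left to right, if $F$ witnesses $M\models_{X(F/x)}(\varphi\ja\psi)$, then $M\models_{X(F/x)}\varphi$ gives one conjunct and Locality turns $M\models_{X(F/x)}\psi$ into $M\models_X\psi$; from right to left, take the witness $F$ for $\on x\varphi$ and observe $M\models_{X(F/x)}\psi$ again by Locality. Rule (4) is the disjunctive analogue: for $\on x(\varphi\tai\psi)$, split $X(F/x)=Y\cup Z$ with $M\models_Y\varphi$, $M\models_Z\psi$; the pre-image of $Z$ under the map $s(a/x)\mapsto s$ together with $F$ restricted to it still supplements $X$ appropriately, and one checks $\psi$ survives restriction of the $x$-column by Locality. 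The slightly fiddly point here is that $Y$ and $Z$ need not respect the fibers of $X(F/x)\to X$, so one should define $F_\varphi(s)=\{a : s(a/x)\in Y\}$, $F_\psi(s)=\{a: s(a/x)\in Z\}$ and note $Y = X(F_\varphi/x)$, $Z=X(F_\psi/x)$, $F=F_\varphi\cup F_\psi$ pointwise.

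The two genuinely new equivalences are (3) and (7), where the independence atom $x\bot_z y$ (resp.\ $y\bot_z x$) appears and must be shown vacuous in context. The key observation for (7): on the team $Y = X(M/x)(F/y)$, the atom $x\bot_z y$ asks that for any two assignments agreeing on $z$ one can find a third matching the first on $xz$ and the second on $yz$; but since the $x$-column is the full duplication $M$, for \emph{every} value we want on the $x$-side and every assignment we want to match on the $(y,z)$-side, such an assignment is present in $Y$ — so $x\bot_z y$ holds automatically on any team of the form $X(M/x)(\ldots/y)$ as long as $z\subseteq\Fr(\phi)-\{x,y\}$ is carried along and the $y$-witness is chosen by a single function $F$ (here the LAX/set-valued semantics matters: if $F(s)$ could only be a singleton this could fail). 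Hence $M\models_X\forall x\on y(\phi\ja x\bot_z y)$ iff there is $F$ with $M\models_{X(M/x)(F/y)}\phi$, iff $M\models_X\forall x\on y\phi$; and then one shows $\forall x\on y\phi\equiv\on y\forall x\phi$ is the standard quantifier-swap that holds in team semantics precisely because $y$'s witness may be set-valued and is chosen after $x$ is universally expanded — formally, given $G:X\to\Po(M)$ for the right side one builds $F:X(M/x)\to\Po(M)$ by $F(s(a/x))=G(s)$, and conversely. For (3), the argument is parallel but on the disjunctive side: in $\forall x((\varphi\ja y\bot x)\tai\psi)$ one splits $X(M/x)=Y_0\cup Z_0$; since $x$ does not occur in $\psi$, one may assume $Z_0$ is closed under re-choosing the $x$-value (replace $Z_0$ by $X(M/x)\upharpoonright$-saturation, which still satisfies $\psi$ by Locality and only shrinks $Y_0$, which can only help $\varphi$ by... no — here $\varphi$ need not be downward closed, so instead keep $Y_0$ fixed and enlarge $Z_0$ to $Z_0' = \{s\in X(M/x): s\upharpoonright(\Dom(X)-\{x\})\in Z_0\upharpoonright(\ldots)\}$, check $M\models_{Z_0'}\psi$ by Locality, and note $Y_0\subseteq Y_0'$ still covers what's left, where $Y_0' = X(M/x)\setminus$ ... ). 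The clean way: show $M\models_{Y_0} y\bot x$ is automatic because $y$ lists all free variables of $\varphi\tai\psi$ other than $x$ and $Y_0\subseteq X(M/x)$ — wait, that needs the full duplication on $Y_0$, which we don't have. The honest statement is that (3) follows by first applying (3') in spirit plus the observation that $\forall x(\theta\tai\psi)\equiv\forall x\theta\tai\psi$ \emph{fails} in general, which is exactly why the atom $y\bot x$ is inserted: it forces $Y_0$ to contain, for each assignment in it, all $M$-many $x$-variants, i.e.\ $Y_0 = (Y_0\upharpoonright V)(M/x)$ where $V=\Fr(\varphi)-\{x\}$, so that $M\models_{Y_0}\varphi\ja y\bot x$ becomes $M\models_{Y_0\upharpoonright V}\forall x\varphi$; with that the split on $X(M/x)$ descends to a split on $X\upharpoonright(\Dom X-\{x\})$ realizing $\forall x\varphi\tai\psi$, and conversely a split for $\forall x\varphi\tai\psi$ lifts.

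The main obstacle I expect is getting the bookkeeping in (3) exactly right: one must verify that the atom $x\bot y$ with $y$ listing precisely $\Fr(\varphi\tai\psi)-\{x\}$ is equivalent, \emph{relative to a subteam of a fully duplicated team} $X(M/x)$, to that subteam being itself of the form $Z(M/x)$; this is where I would spend the real effort, checking the two directions of the defining property of $\bot$ against the structure of $X(M/x)$ and invoking Locality to discard the columns outside $yx$. The remaining equivalences are then routine applications of Locality and the LAX semantics for $\exists$.
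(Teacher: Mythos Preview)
Your treatment of (3'), (4'), and (4) is fine and matches the paper. Your sketch for (3) is close to the paper's argument by the end, though the claim that $y\bot x$ on $Y_0$ forces $Y_0=(Y_0\upharpoonright V)(M/x)$ is not quite right: the atom only forces $Y_0$ to be a ``rectangle'' $(Y_0\upharpoonright y)\times\{x\text{-values occurring in }Y_0\}$, not that the $x$-values fill all of $M$. The paper handles this with a case split: either every $s\in X$ has some $x$-extension in $Z$ (so $\psi$ holds on $X$ by Locality), or some $s\in X$ has all its $x$-extensions in $Y$, and only in that second case does one get the full $M$-range of $x$-values inside $Y$, which together with $y\bot x$ gives $Y(M/x)=Y$. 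You should incorporate that split.

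The real gap is in (7). You assert that $x\bot_z y$ holds automatically on any team of the form $X(M/x)(F/y)$, and hence that the atom is redundant and the equivalence reduces to the bare quantifier swap $\forall x\exists y\,\phi\equiv\exists y\forall x\,\phi$. Both claims are false. The function $F:X(M/x)\to\Po(M)$ may depend on the $x$-coordinate, so for $s,s'\in X(M/x)(F/y)$ with $s(z)=s'(z)$ there is no reason that an assignment with $x$-value $s(x)$ and $y$-value $s'(y)$ lies in the team: the value $s'(y)$ came from $F$ applied to something with $x$-value $s'(x)$, not $s(x)$. And the plain swap $\forall x\exists y\,\phi\equiv\exists y\forall x\,\phi$ fails even in LAX semantics; take $\phi:=(x=y)$ on a model with $|M|>1$. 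Your ``conversely'' in the last sentence of that paragraph is exactly the step that cannot be done without the atom.

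The paper's argument for (7) goes the other way around: it \emph{uses} $x\bot_z y$ in the forward direction to show that the witness $F$ can be replaced by one that does not depend on $x$. Concretely, if $X'=X(M/x)(F/y)$ satisfies $x\bot_z y$ and $b\in F(s(a/x))$ for some $a$, then the atom guarantees $s(a'/x)(b/y)\in X'$ for every $a'\in M$; so defining $F'(s)=\{b:\exists a\ s(a/x)(b/y)\in X'\}$ one gets $X(F'/y)(M/x)=X'$, which gives $\exists y\forall x\,\phi$. The reverse direction is as you wrote.
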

\begin{proof}
\begin{enumerate}
\item[(3)] By locality, it is enough to prove the equivalence for models $M$ and teams $X$ such that $\Dom (X) = \Fr (\varphi \tai \psi) - \{x\}$. So assume that $M \models_X \forall x ((\varphi \ja y \bot x) \tai \psi)$. Then we can find $Y,Z \subseteq X(M/x)$, $Y \cup Z = X(M/x)$, such that $M \models_Y \varphi \ja y \bot x$ and $M \models_Z \psi$. There are two options:\\
$(i)$ For all $s \in X$, there is some $a \in M$ such that $s(a/x) \in Z$. Then by locality, $M \models_X \psi$ and therefore $M \models_X \forall x \varphi \tai \psi$.\\
$(ii)$ For some $s \in X$, $s(a/x) \in Y$ for all $a \in M$. Then because $M \models_Y y \bot x$, we conclude that $Y(M/x) = Y$, and hence $M \models_Y \forall x \varphi$. If $Y'= Y \upharpoonright \Dom (X)$ and $Z' = Z \upharpoonright \Dom (X)$, then by locality, $M \models_{Y'} \forall x \varphi$ and $M \models_{Z'} \psi$. Now $X = Y' \cup Z'$, so we conclude that $M \models_X \forall x \varphi \tai \psi$.

For the converse, assume that $M \models_X \forall x \varphi \tai \psi$. Let $Y,Z \subseteq X$, $Y \cup Z = X$, be such that $M \models_{Y(M/x)} \varphi$ and $M \models_Z \psi$. Clearly $M \models_{Y(M/x)} \varphi \ja y \bot x$, and by locality, $M \models_{Z(M/x)} \psi$. So $M \models_{X(M/x)} (\varphi \ja y \bot x) \tai \psi$ and hence $M \models_X \forall x ((\varphi \ja y \bot x) \tai \psi)$.
\item[(3')] Follows from locality of the semantics.
\item[(4)] If $M \models_X \on x (\varphi \tai \psi)$, and $F : X \rightarrow \mathcal{P}(M)\setminus\{\emptyset\}$ is such that $M \models_{X(F/x)} \varphi \tai \psi$, then we can find $Y,Z \subseteq X(F/x)$, $Y \cup Z = X(F/x)$, so that $M \models_Y \varphi$ and $M \models_Z \psi$. Define 
$$Y' = \{s \in X \mid s(a/x) \in Y\textrm{ for some }a \in F(s)\}$$
 and 
$$Z' = \{s \in X \mid s(a/x) \in Z\textrm{ for some }a \in F(s)\}.$$ Then $M \models_{Z'} \psi$, and if $F' : Y' \rightarrow \mathcal{P}(M)\setminus\{\emptyset\}$ is the function $s \mapsto \{a \in F(s) \mid s(a/x) \in Y\}$, then $M \models_{Y'(F'/x)} \varphi$ and thus $M \models_{Y'} \on x \varphi$. Hence $M \models_X \on x \varphi \tai \psi$.

If $M \models_X \on x \varphi \tai \psi$, then for some $Y,Z \subseteq X$, $Y \cup Z = X$, $M \models_Y \on x \varphi$ and $M \models_Z \psi$. If $F : Y \rightarrow \mathcal{P}(M)\setminus\{\emptyset\}$ is such that $M \models_{Y(F/x)} \varphi$, choose $F': X \rightarrow \mathcal{P}(M)\setminus\{\emptyset\}$ so that $F' \upharpoonright Y = F$ and $F' \upharpoonright (X - Y)$ is some constant function. Then $Y(F'/x) \cup Z(F'/x) = X(F'/x)$, $M \models_{Y(F'/x)} \varphi$ and by locality, $M \models_{Z(F'/x)} \psi$. So $M \models_{X(F'/x)} \varphi \tai \psi$ and hence $M \models \on x(\varphi \tai \psi)$.
\item[(4')] Follows from locality of the semantics.
\item[(7)] As above it is enough to prove the equivalence for models $M$ and teams $X$ with $\Dom (X) =  \Fr (\phi) - \{x,y\}$. Assume first $M \models_X \forall x \on y (x \bot_{z} y \ja \phi)$. Then there is $F: X(M/x) \rightarrow \mathcal{P}(M)\setminus\{\emptyset\}$ such that if $X' = X(M/x)(F/y)$, then  $M \models_{X'} x \bot_{z} y \ja \phi$. If now $b \in M$ is such that there are $a \in M$ and $s \in X$ with $s(a/x)(b/y) \in X'$, then the independence atom guarantees that $s(a/x)(b/y) \in X'$ for all $a \in M$. Therefore, if we define $F': X \rightarrow \mathcal{P}(M)\setminus\{\emptyset\}$ so that 
$$F'(s) = \{b \in M \mid s(a/x)(b/y) \in X'\textrm{ for some }a \in M\},$$
then $X(F'/y)(M/x) = X(M/x)(F/y)$. Hence $M \models_X \on y \forall x \phi$.

For the converse, assume that $M \models_X \on y \forall x \phi$. Then there is $F: X \rightarrow \mathcal{P}(M)\setminus\{\emptyset\}$ such that if $X' = X(F/y)(M/x)$, then $M \models_{X'} \phi$. Clearly $M \models_{X'} x \bot_{z} y$ holds also. If we define $F' : X(M/x) \rightarrow \mathcal{P}(M)\setminus\{\emptyset\}$ so that $F'(s(a/x)) = F(s)$ for all $s \in X$ and $a \in M$, then $X(M/x)(F'/y) = X(F/y)(M/x)$. Hence $M \models_X \forall x \on y( x \bot_{z} y \ja \phi)$.
\end{enumerate}
\end{proof}

\begin{esim}
Generally it is not true that $M \models_X \forall x (\varphi \tai \psi) \Leftrightarrow M \models_X \forall x \varphi \tai \psi$ if $x$ does not occur free in  $\psi$. Let $\varphi := x \subseteq y \ja (x=1 \tai y=1)$ and $\psi := y = 0$. If $M$ is a model with domain $\{0,1\}$ and $X = \{\{(y,0)\},\{(y,1)\}\}$, then $M \models_X \forall x (\varphi \tai \psi)$ but $M \nvDash_X \forall x \varphi \tai \psi$. On the other hand, we can now see that $M \nvDash_X \forall x ((\varphi \ja x \bot y) \tai \psi)$.
\end{esim}

\begin{lem}[Rule 6]
Let
\begin{equation}
\phi_0 = \on x_0 (\bigwedge_{1 \leq i \leq m} u_i \bot_{w_i} v_i \ja \theta_0)
\end{equation}
and
\begin{equation}
\phi_1 = \on x_1 (\bigwedge_{m+1 \leq i \leq m + n} u_i \bot_{w_i} v_i \ja \theta_1)
\end{equation}
be formulas where $x_0$ is a tuple of variables that do not occur in $\phi_1$; $x_1$ is a tuple of variables that do not occur in $\phi_0$; $u_i$, $v_i$ and $w_i$ are tuples of bound variables; $\theta_0$ and $\theta_1$ are first-order formulas. Let $\a$, $\b$, $z_0$, $z_1$ and $r$ be variables that do not appear in formula $\phi_0 \tai \phi_1$. Then if we define
\begin{align*}
\varphi  = &\forall \a \forall \b \on x_0\on x_1 \on z_0\on z_1\on r [\bigwedge_{1 \leq i \leq m+n}  u_i \bot_{w_i r} v_i \ja \bigwedge_{i = 0,1}  \dep(z_i) \ja\\
&(\neg  z_0 = z_1 \tai \a = \b) \ja ((\theta_0  \ja r = z_0) \tai (\theta_1 \ja r = z_1))],
\end{align*}
we have that $\phi_0 \tai \phi_1 \equiv \varphi$.
\end{lem}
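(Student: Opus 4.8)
The plan is to prove both halves of the equivalence $\phi_0\tai\phi_1\equiv\varphi$ directly from the team semantics of $\I$. By Locality it is enough to treat models $M$ and teams $X$ with $\Dom(X)=\Fr(\phi_0\tai\phi_1)=\Fr(\varphi)=\Fr(\phi_0)\cup\Fr(\phi_1)$, and I would first dispose of the trivial cases $X=\emptyset$ and $|M|=1$ (in the latter both formulas collapse to a statement about $\theta_0,\theta_1$ at the unique assignment). Two small facts are used repeatedly: since the $u_i,v_i,w_i$ are tuples of variables from $x_0$ or from $x_1$ and $\theta_0,\theta_1$ are first-order, all of these formulas are local to $\Dom(X)\cup\{x_0\}$, respectively $\Dom(X)\cup\{x_1\}$, so teams may be freely projected onto those sets; and if $r$ is constant on a team, then $u_i\bot_{w_i r}v_i$ is equivalent there to $u_i\bot_{w_i}v_i$. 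In what follows I abbreviate by ``the body of $\varphi$'' the quantifier-free formula following the prefix $\forall\a\forall\b\on x_0\on x_1\on z_0\on z_1\on r$.

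\emph{Direction $\phi_0\tai\phi_1\models\varphi$.} Assume $M\models_X\phi_0\tai\phi_1$, say $X=Y\cup Z$ with $M\models_Y\phi_0$ witnessed by $F_0$ and $M\models_Z\phi_1$ witnessed by $F_1$. Fix distinct $c_0,c_1\in M$ and arbitrary tuples $\bar a_0$ (of length $|x_0|$) and $\bar a_1$ (of length $|x_1|$). Verifying $\forall\a\forall\b$ yields $X(M^2/\a\b)$; for the block $\on x_0\on x_1\on z_0\on z_1\on r$ I take the single witness function that extends an assignment with restriction $s\in X$ by the tuples $(a,\bar a_1,c_0,c_1,c_0)$ for $a\in F_0(s)$ if $s\in Y$ and by $(\bar a_0,a,c_0,c_1,c_1)$ for $a\in F_1(s)$ if $s\in Z$; it is nonempty-valued since $Y\cup Z=X$. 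Writing $X^*$ for the resulting team, we get $X^*=X^*_0\cup X^*_1$, where $X^*_0$ is $Y(F_0/x_0)$ with $x_1,z_0,z_1,r$ set to $\bar a_1,c_0,c_1,c_0$ (and $\a,\b$ ranging over $M$) and $X^*_1$ is $Z(F_1/x_1)$ with $x_0,z_0,z_1,r$ set to $\bar a_0,c_0,c_1,c_1$; the two parts are disjoint since $c_0\neq c_1$. Now I verify the body of $\varphi$ on $X^*$: $\dep(z_0)$ and $\dep(z_1)$ hold since $z_0,z_1$ are constant; the first-order conjunct $\neg z_0=z_1\tai\a=\b$ holds at every assignment because $c_0\neq c_1$; the disjunction $(\theta_0\ja r=z_0)\tai(\theta_1\ja r=z_1)$ is witnessed by $X^*=X^*_0\cup X^*_1$, using $r=c_0=z_0$ on $X^*_0$, $r=c_1=z_1$ on $X^*_1$, and locality to transfer $M\models_{Y(F_0/x_0)}\theta_0$ and $M\models_{Z(F_1/x_1)}\theta_1$; and each $u_i\bot_{w_i r}v_i$ holds because on the part of $X^*$ where the relevant block is absent the whole tuple $u_i w_i v_i r$ is constant (so the atom is trivial), while on the other part $r$ is constant and the atom reduces, by locality, to $u_i\bot_{w_i}v_i$ inside $Y(F_0/x_0)$, respectively $Z(F_1/x_1)$, which holds by assumption. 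Hence $M\models_X\varphi$.

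\emph{Direction $\varphi\models\phi_0\tai\phi_1$.} Assume $M\models_X\varphi$, fix a witness function for the block $\on x_0\on x_1\on z_0\on z_1\on r$ over $X(M^2/\a\b)$, and let $X^*$ be the resulting team, which then satisfies the body of $\varphi$. By $\dep(z_0)$ and $\dep(z_1)$ the values $z_0\equiv c_0$ and $z_1\equiv c_1$ are constant on $X^*$. The crucial point is that, because the witness function is nonempty-valued and $|M|\geq 2$, the team $X^*$ contains an assignment with $\a\neq\b$, so the first-order conjunct $\neg z_0=z_1\tai\a=\b$ forces $c_0\neq c_1$. Consequently the split $X^*=A_0\cup A_1$ witnessing $(\theta_0\ja r=z_0)\tai(\theta_1\ja r=z_1)$ must be the partition of $X^*$ by the value of $r$: $A_0=\{s\in X^*:r\langle s\rangle=c_0\}$, $A_1=\{s\in X^*:r\langle s\rangle=c_1\}$ (every assignment has $r\in\{c_0,c_1\}$). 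Put $Y:=A_0\upharpoonright\Dom(X)$, let $F_0(s)$ be the set of $x_0$-values of the assignments of $A_0$ restricting to $s$, so that $Y(F_0/x_0)=A_0\upharpoonright(\Dom(X)\cup\{x_0\})$. Then $M\models_{Y(F_0/x_0)}\theta_0$ by locality from $M\models_{A_0}\theta_0$; and for $i\leq m$, since $r\equiv c_0$ on $A_0=\{s\in X^*:r\langle s\rangle=c_0\}$, any $s''$ witnessing $u_i\bot_{w_i r}v_i$ (which holds on $X^*\supseteq A_0$) for given $s,s'\in A_0$ again satisfies $r\langle s''\rangle=c_0$, hence $s''\in A_0$; so the atom restricts to $A_0$, where by constancy of $r$ and locality it yields $M\models_{Y(F_0/x_0)}u_i\bot_{w_i}v_i$. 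Thus $M\models_Y\phi_0$, and symmetrically, with $Z:=A_1\upharpoonright\Dom(X)$ and $F_1$ analogous, $M\models_Z\phi_1$. Finally $Y\cup Z=X^*\upharpoonright\Dom(X)=X$, again because the witness function is nonempty-valued, so $M\models_X\phi_0\tai\phi_1$.

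I expect the main obstacle to be handling the failure of downward closure: a naive disjoint split $X=Y\cup(Z\setminus Y)$ need not witness $\phi_0\tai\phi_1$, which is exactly why the flag $r$ — pinned down by $\dep(z_0),\dep(z_1)$ and by the $\forall\a\forall\b\,(\neg z_0=z_1\tai\a=\b)$ gadget forcing $c_0\neq c_1$, where the non-emptiness of the LAX witness functions is essential — is introduced to convert the overlapping union into a genuine partition; and the same failure of downward closure is what forces the atoms of $\varphi$ to carry $r$ among their conditioning terms, so that they restrict correctly to the two blocks of that partition.
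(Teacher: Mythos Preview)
Your proposal is correct and follows essentially the same route as the paper: a case split on $|M|=1$ versus $|M|>1$, and in the nontrivial case the same tagging construction (dummy values on the ``inactive'' block of $x_0/x_1$, constant values $c_0\neq c_1$ for $z_0,z_1$, and $r$ used as a flag) for the forward direction, and the same extraction of $Y,Z$ via the $r$-partition for the converse, using $\dep(z_i)$ together with the $\forall\alpha\forall\beta$ gadget to force $c_0\neq c_1$.

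One small point of phrasing in the forward direction: you verify $u_i\bot_{w_ir}v_i$ ``on each part'' of $X^*$, but the atom must be checked on the whole team. The missing sentence is simply that any $s,s'\in X^*$ with matching $w_ir$-values lie in the same part (since $r$ separates $X^*_0$ from $X^*_1$), after which your part-by-part verification indeed suffices. This is exactly what the paper does, so with that one line added your argument matches it.
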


\begin{proof}
We divide the proof into two parts. First we prove that the equivalence holds for models $M$ with $|M|=1$ and then for models with larger domain. By locality of the semantics, we can without loss of generality assume that $X$ is always a team with $\Dom (X) = \Fr (\phi_0 \tai \phi_1)$. For notational simplicity we can without loss of generality assume that $x_0$ and $x_1$ are both of same length $l$.
\begin{enumerate}
\item Suppose $M$ is a model $|M| =1$ and $X$ is a team. If $M \models_X \phi_0 \tai \phi_1$, then $M \models_X \phi_0$ or $M \models_X \phi_1$. Now if we evaluate all the quantified variables in $\varphi$ by the only possible way, we have that $(\theta_0 \ja r=z_0)$ or $(\theta_1 \ja r=z_1)$ holds in $X$. Also $\a = \b$ must be true, so $(\neg z_0 = z_1 \tai \a = \b)$ holds in $X$. All the independence atoms are trivially true, so $M \models_X \varphi$.

Suppose then $M \models_X \varphi$. Then $X$ extended with values for $x_0$, $x_1$ must have $\theta_0$ or $\theta_1$ true. In either case independence atoms hold trivially, so $M \models_X \phi_0$ or $M \models_X \phi_1$. Hence $M \models_X \phi_0 \tai \phi_1$.

\item Suppose now $M$ is a model with $|M| >1$ and $X$ is a team. Let $0$ and $1$ be some distinct members of $M$.

Assume first that $M \models_X \phi_0 \tai \phi_1$. Then there are $Y,Z \subseteq X$, $Y \cup Z = X$, such that $M \models_Y \phi_0$ and $M \models_Z \phi_1$. Let $F_Y : Y \rightarrow \mathcal{P}(M^l)\setminus\{\emptyset\}$ and $F_Z : Z \rightarrow \mathcal{P}(M^l)\setminus\{\emptyset\}$ be functions witnessing this. Now we want to form a function $F : X(M^{2}/\a\b) \rightarrow \mathcal{P}(M^{2l+3})\setminus\{\emptyset\}$ so that if $X' = X(M^{2}/\a\b)(F/x_0x_1z_0z_1r)$, then $M$ and $X'$ satisfy the quantifier-free part of $\varphi$. First we define sets of tuples as follows:\\
Let $s \in X(M^{2}/\a\b)$. Define 
\begin{equation*}
\begin{split}
A_{s,z_0} = \{0\}\textrm{ and }A_{s,z_1} = \{1\}
\end{split}
\end{equation*}
and let
\begin{equation*}
\begin{split}
B_{s,x_0}& = F_Y(s\upharpoonright \Dom (X))\textrm{, }
B_{s,x_1}=\{0^l\}\textrm{ and }B_{s,r} =\{0\}\textrm{ if }s \upharpoonright \Dom (X) \in Y,\\ B_{s,x_0}& =B_{s,x_1}=B_{s,r}= \emptyset\textrm{ otherwise.}\\
C_{s,x_0}& = \{0^l\}\textrm{, }C_{s,x_1} = F_Z (s\upharpoonright \Dom (X))\textrm{ and }C_{s,r} = \{1\}\textrm{ if }s \upharpoonright \Dom (X) \in Z,\\ 
C_{s,y_0}& =C_{s,y_1}=C_{s,r}=\emptyset \textrm{ otherwise.}
\end{split}
\end{equation*}
Then define
\begin{equation*}
\begin{split}
B_s& =  {B_{s,x_0}}^\frown {B_{s,x_1}} ^\frown {A_{s,z_0}} ^\frown {A_{s,z_1}} ^\frown {B_{s,r}}\textrm{ and }\\
C_s& = {C_{s,x_0}}^\frown {C_{s,x_1}}^\frown {A_{s,z_0}} ^\frown {A_{s,z_1}} ^\frown {C_{s,r}}
\end{split}
\end{equation*}
and let $F(s) = B_s \cup C_s.$ Note that by the definition, $F(s)$ is non-empty for all $s \in X(M^2/\a \b)$.

Now it is enough to show that the quantifier-free part of $\varphi$ holds for $M$ and $X'$. So let us go through it part by part:
\begin{itemize}
\item $\bigwedge_{1 \leq i \leq m+n}  u_i \bot_{w_i r} v_i$: Let $i \leq m+n$ and $t,t' \in X'$  be such that $t(w_ir) = t'(w_ir)$. If they both evaluate $r$ as, say $0$, then by the definition of $F$, $t \upharpoonright  (\Dom (X) \cup \Var (x_0)),t'\upharpoonright (\Dom (X)\cup \Var(x_0)) \in Y(F_Y/x_0)$. If $i \leq m$, then this team satisfies $u_i \bot_{w_i} v_i$, and there is an assignment in $Y(F_Y/x_0)$ agreeing with $t$ for $u_iw_i$ and with $t'$ for $v_i$. Now we can extend it to an assignment $t''$ of $X'$ such that $t''(r)=0$. Then this $t''$ is as wanted. Suppose $i > m$. Then all the variables in tuples $u_i$, $v_i$ and $w_i$ are from tuple $x_1$ and $t(x_1)=t'(x_1)=0^l$. Thus we can choose $t''=t$.

The case where $t(r)=t'(r)=1$ is analogous.

\item $\bigwedge_{i = 0,1} \dep(z_i)$: Follows from the definition of $F$.

\item $\neg z_0 = z_1 \tai \a = \b$: Clearly $M \models_{X'} \neg z_0 = z_1$.

\item $(\theta_0  \ja r = z_0) \tai (\theta_1 \ja r = z_1)$: Simply divide $X'$ to $Y'$ and $Z'$ so that in $Y'$, $r=0$ and in $Z'$, $r=1$. Then $Y' \upharpoonright (\Dom (X) \cup \Var (x_0)) = Y(F_Y/x_0)$, so $\theta_0$ holds in $Y'$. Also $r=z_0$ holds trivially and hence $M \models_{Y'} \theta_0  \ja r = z_0$. Similarly $M \models_{Z'} \theta_1 \ja r = z_1$.
\end{itemize}

Assume then that $M \models_X \varphi$ and let $F : X(M^{2}/\a\b) \rightarrow \mathcal{P}(M^{2l+3})\setminus\{\emptyset\}$ be a function witnessing this. Then if $X'= X(M^{2}/\a\b)(F/x_0x_1z_0z_1r)$ we have that the quantifier-free part of $\varphi$ is true for $M$ and $X'$. Now define
\begin{align*}
Y&=\{s \in X \mid \on t \in X'[t \upharpoonright \Dom (X) = s \textrm{ and }t(r)=t(z_0)]\} \textrm{ and }\\
Z&=\{s \in X \mid \on t \in X'[t \upharpoonright \Dom (X) = s \textrm{ and }t(r)=t(z_1)]\}.
\end{align*}
Note that $M \models_t r=z_0 \tai r=z_1$ for all $t \in X'$, so $Y \cup Z = X$. Define also functions $F_Y : Y \rightarrow \mathcal{P}(M^l)\setminus\{\emptyset\}$ and $F_Z: Z \rightarrow \mathcal{P}(M^l)\setminus\{\emptyset\}$ by
\begin{align*}
F_Y(s)&=\{t(x_0)\mid t \in X'\textrm{, }t \upharpoonright \Dom (X) = s \textrm{ and }t(r)=t(z_0)\}\textrm{ and }\\ 
F_z(s)&=\{t(x_1)\mid t \in X'\textrm{, } t \upharpoonright \Dom (X) = s \textrm{ and }t(r)=t(z_1)\}.
\end{align*}
It is enough to show that 
\begin{equation}\label{eka}
M \models_{Y(F_Y/x_0)} \bigwedge_{1 \leq i \leq m} u_i \bot_{w_i} v_i \ja \theta_0
\end{equation}
and 
\begin{equation}\label{toinen}
M \models_{Z(F_Z/x_1)} \bigwedge_{m+1 \leq i \leq m+n} u_i \bot_{w_i} v_i \ja \theta_1.
\end{equation} 
For (\ref{eka}) assume first that $1 \leq i \leq m$ and $s,s' \in Y(F_Y/x_0)$ are such that $s(w_i)=s'(w_i)$. By the definition of $F_Y$, these assignments are extended by some $t,t'\in X'$ such that $t(r) = t(z_0)$ and $t'(r)=t'(z_0)$. Atom $\dep(z_0)$ holds in $X'$, so $t(r)=t'(r)$. Also $u_i \bot_{w_ir} v_i$ holds in $X'$, so there is $t'' \in X'$ such that $t''(u_iw_ir)=t(u_iw_ir)$ and $t''(v_i)=t'(v_i)$. Now also $t''(r)=t''(z_0)$, so $t''$ extends some $s'' \in Y(F_Y/x_0)$. Then $s''(u_iw_i)=t''(u_iw_i)=t(u_iw_i)=s(u_iw_i)$ and $s''(v_i)=t''(v_i)=t'(v_i)=s'(v_i)$, and hence $s''$ is as wanted. 

Then let us show that $M \models_{Y(F_Y/x)} \theta_0$. Consider this extension $t$ of $s$ such that $t(r)=t(z_0)$. First notice that $\a = \b$ cannot hold in whole $X'$ because $\a$ and $\b$ were universally quantified and $|M| > 1$. Therefore, for some assignment in $X'$, $\neg z_0 = z_1$ holds. But in $X'$ $z_0$ and $z_1$ are constants, so $\neg z_0 = z_1$ holds in whole $X'$. Hence $t(r) \neq t(z_1)$, and so $t$ belongs to the part of $X'$ where $\theta_0 \ja r=z_0$ holds. Therefore $M \models_s \theta_0$, and because $\theta_0$ is first-order, we have by definition that $M \models_{Y(F_Y/x_0)} \theta_0$.

The proof of (\ref{toinen}) is analogous. Hence $M \models_X \phi_0 \tai \phi_1$.
\end{enumerate}
\end{proof}

Notice that in the previous lemma parameters $\a$ and $\b$ were needed only for the case $|M|=1$. If we forget these trivial models, rule \ref{rule6} can be simplified.

Before going to the soundness proof, we need the following lemma. Recall that when using the  notation $\phi(x_{i_1}/x_1)\ldots(x_{i_n}/x_n)$ we presume  that none of the variables $x_{i_1}, \ldots ,x_{i_n}$ become bound in the substitution.

\begin{lem}[Change of free variables]\label{freevar}
Let the free variables of $\phi$ be $x_1, \ldots ,x_n$. Let $i_1, \ldots ,i_n$ be distinct. If $X$ is a team with $\Dom (X)=\{x_1, \ldots ,x_n\}$, let $X'$ consist of the assignments $x_{i_j} \mapsto s(x_j)$ where $s \in X$. Then
\begin{equation*}
M \models_X \phi \Leftrightarrow M \models_{X'} \phi(x_{i_1}/x_1)\ldots(x_{i_n}/x_n).
\end{equation*}
\end{lem}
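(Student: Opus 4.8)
The statement is an equivalence parametrized by the team $X$ and its renamed copy $X'$; the natural approach is induction on the structure of $\phi$, following exactly the clauses of Definition~\ref{def2}. The underlying bijection is the map $\rho$ sending an assignment $s$ with $\Dom(s)=\{x_1,\dots,x_n\}$ to the assignment $\rho(s)$ with $\Dom(\rho(s))=\{x_{i_1},\dots,x_{i_n}\}$ defined by $\rho(s)(x_{i_j})=s(x_j)$; then $X'=\{\rho(s)\mid s\in X\}$ and $\rho$ is a bijection $X\to X'$. The key bookkeeping fact, to be recorded first, is the term-level identity $t(x_{i_1}/x_1)\dots(x_{i_n}/x_n)^M\langle\rho(s)\rangle = t^M\langle s\rangle$ for every term $t$ with variables among $x_1,\dots,x_n$ (proved by a trivial induction on $t$), and its tuple version. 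Write $\sigma$ for the substitution $(x_{i_1}/x_1)\dots(x_{i_n}/x_n)$ throughout.

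\textbf{First I would} dispose of the base cases. For $\phi\in$ FO, $M\models_X\phi$ iff $M\models_s\phi$ for all $s\in X$, and by the classical substitution lemma of first-order logic $M\models_s\phi$ iff $M\models_{\rho(s)}\phi\sigma$; since $\rho$ is a bijection $X\to X'$ this gives $M\models_{X'}\phi\sigma$. For the independence atom $\phi=t_1\bot_{t_3}t_2$, note $\phi\sigma = t_1\sigma\bot_{t_3\sigma}t_2\sigma$, and the semantic clause is phrased purely in terms of the values $t_i^M\langle s\rangle$; by the term identity above, $t_i\sigma^M\langle\rho(s)\rangle=t_i^M\langle s\rangle$, so the condition defining $M\models_{X'}\phi\sigma$ (quantifying over $s,s',s''\in X'$) is, via $\rho$, literally the condition defining $M\models_X\phi$. (The empty-tuple conventions are preserved as well, since $\sigma$ maps an empty tuple to an empty tuple.)

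\textbf{Then} the inductive cases. Conjunction is immediate from the induction hypothesis since $(\psi\ja\theta)\sigma=\psi\sigma\ja\theta\sigma$ and the clause is just conjunction. For disjunction $\phi=\psi\tai\theta$: if $M\models_X\phi$ via $Y\cup Z=X$, then $\rho(Y)\cup\rho(Z)=X'$ and the induction hypothesis applied to the teams $Y$ (with copy $\rho(Y)$) and $Z$ (with copy $\rho(Z)$) — note $\rho$ restricted to $Y$ is exactly the "prime" construction for $Y$ — gives $M\models_{\rho(Y)}\psi\sigma$ and $M\models_{\rho(Z)}\theta\sigma$, hence $M\models_{X'}\phi\sigma$; conversely a splitting of $X'$ pushes back through $\rho^{-1}$. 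The quantifier cases need slightly more care because $\sigma$ does not touch the bound variable $x$ (substitution is only on free occurrences) while the team operation adds $x$ to the domain. Here the side condition that no variable of $x_{i_1},\dots,x_{i_n}$ becomes bound — i.e.\ $x\notin\{x_{i_1},\dots,x_{i_n}\}$, and also $x\notin\{x_1,\dots,x_n\}$ since $x$ is not free in $\phi$... — wait, $x$ \emph{is} among the $x_j$ only if $x$ is free, which it is not for $\on x\psi$; so $\Fr(\psi)\subseteq\{x_1,\dots,x_n,x\}$. For $\phi=\forall x\psi$: the renaming for the team $X(M/x)$, with free variables $\{x_1,\dots,x_n,x\}$, extends $\rho$ to $x\mapsto x$, and one checks $\bigl(X(M/x)\bigr)' = X'(M/x)$ directly from the definitions; then apply the induction hypothesis to $X(M/x)$ and $\psi$, using $(\forall x\psi)\sigma=\forall x(\psi\sigma)$ (valid by the no-capture hypothesis). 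For $\phi=\on x\psi$ the same works with $X(F/x)$ in place of $X(M/x)$, where one transports the witness function $F:X\to\Po(M)$ to $F':X'\to\Po(M)$ by $F'=F\circ\rho^{-1}$, giving $\bigl(X(F/x)\bigr)'=X'(F'/x)$, and conversely.

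\textbf{The main obstacle} is not conceptual but organizational: one must formulate the induction hypothesis with the team $X$ universally quantified (as the statement already does) so that it can be reapplied to subteams $Y,Z$ and to supplemented teams $X(F/x)$, $X(M/x)$ whose free-variable sets differ from $\{x_1,\dots,x_n\}$; and one must carefully track that the extended renaming on $\{x_1,\dots,x_n,x\}$ is still injective, which is exactly what the "no variable of $t$ becomes bound" proviso (here specialized to the $x_{i_j}$) guarantees, together with the distinctness of $i_1,\dots,i_n$. Everything else is routine unwinding of Definition~\ref{def2} through the bijection $\rho$.
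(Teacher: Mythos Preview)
Your proposal is correct and is exactly the approach the paper has in mind: the paper's entire proof reads ``Easy induction on the complexity of the formula,'' and what you have written is a careful unpacking of that induction through the clauses of Definition~\ref{def2} via the renaming bijection $\rho$. The only point worth flagging is the one you already identify in your ``main obstacle'' paragraph: to make the quantifier step go through cleanly you should either strengthen the induction hypothesis to allow $\Dom(X)\supseteq\Fr(\phi)$ (with the renaming extended by the identity on the extra variables) or invoke Locality to restrict to $\Fr(\psi)$ before applying the hypothesis to the subformula---either fix is routine.
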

\begin{proof}
Easy induction on the complexity of the formula.
\end{proof}

\begin{prop}\label{soundnessthm} Let $T \cup \{\psi\}$ be a set of formulas of independence logic. If $T \vdash_{\I} \psi$, then $T \models \psi$.
\end{prop}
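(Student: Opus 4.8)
The plan is to prove soundness by induction on the length of the derivation $T \vdash_{\I} \psi$. The structure of the argument is standard for natural deduction systems: I would show that each axiom is valid and that each inference rule preserves logical consequence, in the sense that if all the premises of a rule are logical consequences of $T$ (together with the discharged assumptions appropriately handled), then so is the conclusion. Since the rules come in two batches — the Gentzen-style rules of Figure~\ref{ded} and the special rules of Definition~\ref{rules} — I would organize the verification accordingly.

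For the rules of Figure~\ref{ded}, the conjunction rules are immediate from clause~4 of Definition~\ref{def2}. For the rules restricted by Conditions 1 and 2 (disjunction elimination with first-order conclusion, negation introduction/elimination, universal elimination), the point is that on first-order formulas the team semantics reduces to ordinary Tarskian satisfaction at each assignment: $M \models_X \phi$ iff $M \models_s \phi$ for all $s \in X$. Hence these rules inherit soundness from classical first-order logic once one checks that the side conditions guarantee we stay within the first-order fragment where they apply. The quantifier introduction rules $\forall$I and $\exists$I require the usual eigenvariable/substitution bookkeeping: for $\forall$I one uses that $x_i$ not occurring free in undischarged assumptions means the team can be freely duplicated in the $x_i$ coordinate; for $\exists$I one checks that $M \models_X A(t/x_i)$ gives a witnessing function $F$ (namely $s \mapsto \{t^M\langle s\rangle\}$) showing $M \models_X \exists x_i A$. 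The $\exists$E rule uses Condition 4 together with Locality to discard the eigenvariable from the conclusion.

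For Definition~\ref{rules}, rules~3, 4, 7 and the conjunctive variants 3', 4' are exactly Lemma~\ref{saannot}, so they preserve logical equivalence and a fortiori logical consequence; rule~6 is the preceding lemma (Rule~6). Rules~1 and 2 (disjunction substitution, commutation and associativity) are routine from clause~3 of Definition~\ref{def2} — associativity is immediate and commutation symmetric, while disjunction substitution follows by splitting the witnessing decomposition. Rule~5 (universal substitution) is handled by combining $\forall$E-style reasoning with Lemma~\ref{freevar} (Change of free variables) to manage the renaming, using the side condition on $y$ exactly as in rule~5's statement. Rules~11--14 (identity axiom and identity rules) reduce to the first-order case via clause~1 of Definition~\ref{def2}, since all formulas involved are first-order or the substitution preserves the truth conditions assignment-by-assignment. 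The one genuinely substantial rule is rule~8 (Independence transmission), whose soundness I expect to be the main obstacle: it involves a delicate combinatorial argument with the parameter $p' = p + m(p+2)^2$ counting how many existential witnesses are needed so that every pair of "layers" can be matched up through a common value, and verifying that the inclusion atoms $x_{1,i}y_{1,i} \subseteq x_{1,-1}y_{1,-1}$ together with the disjunction of equalities $\bigvee_{p < l \leq p'} u^i_{1,j}v^i_{1,k}w^i_{1,j} = u^i_{1,l}v^i_{1,l}w^i_{1,l}$ suffice to reconstruct, from a team satisfying $A$, a team satisfying $B$. This is where the bulk of the technical work lies; the remaining rules are, as indicated in the text, either derivable in first-order logic or direct consequences of the equivalence lemmas already proved.
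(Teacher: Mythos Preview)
Your proposal is correct and follows essentially the same route as the paper: induction on the length of the derivation, with rules 3, 4, 6, 7 discharged by the preceding equivalence lemmas, most of the remaining rules (including the Figure~\ref{ded} rules and the identity rules) declared routine or reduced to the first-order case, and the substantive work isolated in rules~5 and~8. The paper in fact delegates the soundness of $\vee$E, $\neg$I, $\forall$I, $\exists$I, $\exists$E and the restricted $\forall$E to the dependence-logic paper \cite{juhajouko}, and then writes out only rules~5 and~8 in full; your sketch for rule~5 via Lemma~\ref{freevar} and your identification of the $p' = p + m(p+2)^2$ combinatorics as the heart of rule~8 match the paper's treatment exactly.
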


\begin{proof} We will prove this claim by induction on the length of derivation. First notice that the previous lemmas provide the soundness of rules 3, 4, 6 and 7. Rules 2, 11, 12, 13, 14, $\ja$ I, $\ja$ E, $\tai$ I and $\neg$ E are obviously sound. Also rules $\forall$ I, $\on$ I and $\on$ E are identical to the corresponding rules in the dependence logic case and  the proof for these is as in \cite{juhajouko}. Note that these rules do not apply downward closure. Rule $\forall$ E is a restricted version of the corresponding dependence logic rule and also here the proof introduced in \cite{juhajouko} suffices. The rules presented in Figure \ref{dod} form the well-known sound and complete axiomatization for inclusion dependencies (see \cite{cfp}), and are clearly sound in this context too. Hence, we  prove induction steps for rules \ref{rule1}, \ref{rule5}, \ref{rule8}, \ref{rule9} and \ref{rule10}. Note that in the soundness proof of rule 1, we use the fact that all the non-discharged assumptions used in the derivation of $C$ are downwards closed.  The soundness proofs of $\tai$ E and $\neg$ I are then analogous applications of this principle, and hence omitted.
\begin{itemize}
\item[Rule 1] Assume that we have a natural deduction proof of $A\tai C$ from the  assumptions 
\begin{equation*}
 \{A_1,\ldots , A_k\}
\end{equation*}
with last rule 1. Let $M$ and $X$ be such that  $M\models _X A_i$ for $i=1, \ldots ,k$. By the assumption, we have a shorter proof of $A\tai B$ from the assumptions $ \{A_1,\ldots, A_k\}$. Then by the induction assumption, $M \models_X  A\tai B$, and hence there exist $Y,Z\subseteq X$, $Y\cup Z = X$, such that $M \models_Y A$ and $M\models_Z B$. By the assumption we have also a shorter proof of $C$ from $\{B,A_{i_1}, \ldots,A_{i_l}\}$ where $\{A_{i_1}, \ldots ,A_{i_l}\} \subseteq \{A_1, \ldots ,A_k\}$ is a set of first-order formulas. Now by Proposition \ref{flat}, $M \models_Z A_{i_j}$ for $j=1, \ldots ,l$. Therefore  by the induction assumption  $M\models_Z C$, and hence we conclude that $M\models _X A\tai C$.

\item[Rule 5] Assume that we have a natural deduction proof of $\forall y B$ from the  assumptions 
\begin{equation*}
 \{A_1,\ldots , A_k\}
\end{equation*}
with last rule 5. Let $M$ and $X$ be such that  $M\models _X A_i$ for $i=1, \ldots ,k$. By the assumption, we have a shorter proof of $\forall x A$ from the assumptions $ \{A_1,\ldots, A_k\}$. Then by the induction assumption, $M \models_X \forall x A$. Let $V = \Dom (X)-\{x,y\}$ and $X'=X \upharpoonright V$. Variables $x$ and $y$ do not occur free in $\forall x A$, so also $M \models_{X'} \forall x A$ and hence $M \models_{X'(M/x)} A$. By Lemma \ref{freevar}, $M \models_{X'(M/y)} A(y/x)$. Because $X'(M/y)=X(M/y) \upharpoonright (V \cup \{y\})$ and $x$ does not occur free in $A(y/x)$, we have that $M \models_{X(M/y)} A(y/x)$. Also by the assumption, we have a shorter proof of $B$ from the assumptions
\begin{equation*}
 \{A(y/x),A_{i_1},\ldots , A_{i_l}\}
\end{equation*}
where $\{A_{i_1},\ldots,A_{i_l}\} \subseteq \{A_1, \ldots ,A_k\}$ and $y$ does not occur free in $A_{i_j}$ for $j=1, \ldots ,l$. Hence $M \models_{X(M/y)} A_{i_j}$ for $j=1, \ldots ,l$, so by the induction assumption, $M \models_{X(M/y)} B$. Hence $M \models_{X} \forall y B$.
\item[Rule \ref{rule8}] Assume that we have a natural deduction proof of $A(y/x)$ from the  assumptions $ \{A_1,\ldots , A_k\}$ with last rule \ref{rule8}. Let $M$ and $X$ be such that  $M\models _X A_i$ for $i=1, \ldots ,k$. By the assumption, we have shorter proofs of $y \sub x$ and $A$ from the assumptions $ \{A_1,\ldots, A_k\}$. Then by the induction assumption, 
\begin{equation}\label{81}
M \models_X y \sub x
\end{equation}
 and $M \models_X A$. Assume that  $x=(x_1, \ldots ,x_n)$ and $y=(y_1, \ldots ,y_n)$, and let $V_x:= \{x_1, \ldots ,x_n\}$ and $V_y:= \{y_1, \ldots ,y_n\}$. Since $V_x = \Fr(A)$, we first obtain by Proposition \ref{loc} that $M \models_{X \upharpoonright V_x} A$. Then letting $X'$ consist of the assignments $y_i \mapsto s(x_i)$, for $s \in X \upharpoonright V$, we obtain by Lemma \ref{freevar} that
$M \models_{X'} A(y/x)$. Hence, and since $X \upharpoonright V_y \sub X'$ by \eqref{81}, we obtain by Proposition \ref{flat} that $M \models_{X\upharpoonright V_y} A(y/x)$. For this, note that $A(y/x)$ is first-order by the prerequisite. Therefore, by Proposition \ref{loc} $M \models_{X} A(y/x)$.

\item[Rule \ref{rule9}]
Assume that we have a natural deduction proof of 
$
\on x_2( x_2 \sub x \ja (A \tai u_0v_1w_0 = u_2v_2w_2))
$
from the assumptions $ \{A_1,\ldots , A_k\}$ with last rule \ref{rule9}. Let $M$ and $X$ be such that  $M\models _X A_i$ for $i=1, \ldots ,k$. By the assumption, we have shorter proofs of $\indep{w}{u}{v} $, $x_0 \sub x $ and  $x_1 \sub x $ from the assumptions $ \{A_1,\ldots, A_k\}$. Then by the induction assumption,\begin{itemize}
\item[$(i)$] $M \models_X \indep{w}{u}{v}$, 
\item[$(ii)$] $M \models_X x_0 \sub x $,
\item[$(iii)$] $M \models_X x_1 \sub x $.
\end{itemize}
It suffices to define a $F: X \rightarrow  \mathcal{P}(M^{|x_2|}) \setminus \{\emptyset\}$ such that 
\begin{equation}\label{91}
M \models_{X(F/x_2)}x_2 \sub x \ja (A \tai u_0v_1w_0 = u_2v_2w_2).
\end{equation} Let $s \in X$. By $(ii)$ and $(iii)$, there exist $s',s''\in X$ such that $s(x_0)=s'(x)$ and $s(x_1) = s''(x)$. If $s'(w) \neq s''(w)$, then we let $F(s)=\{s(x)\}$. If $s'(w) = s''(w)$, then by $(i)$ we can choose a $s^* \in X$ such that $s^*(u)s^*(v)s^*(w)=s'(u)s''(v)s'(w)$, and let $F(s)=\{s^*(x)\}$. Recall that in \eqref{91}, $A$ is $\perp$ if $w$ is empty, and otherwise $A$ is $\neg w_0 = w_1$. Therefore, it is straightforward to show that with this definition of $F$, \eqref{91} follows. 

\item[Rule \ref{rule10}]
Assume that we have a natural deduction proof of 
$$\forall x \on y \big ( B\ja 
\forall u \on v \on x' \on y' (A(uv/xy) \ja x'y'=xy \ja x'y' \sub uv)\big ) 
$$
from the assumptions $ \{A_1,\ldots , A_k\}$ with last rule \ref{rule10}. Let $M$ and $X$ be such that  $M\models _X A_i$ for $i=1, \ldots ,k$. By the assumption, we have a shorter proof of $\forall x \on y (A \ja B)$ from the assumptions $ \{A_1,\ldots, A_k\}$. Then by the induction assumption there exists a $F: X(M^{|x|}/x)\rightarrow   \mathcal{P}(M^{|y|}) \setminus \{\emptyset\}$ such that $M \models_{X'} A\ja B$, for $X' := X(M^{|x|}/x)(F/y)$. It suffices to define two functions $G_0: X'(M^{|u|}/u) \rightarrow \mathcal{P}(M^{|v|}) \setminus \{\emptyset\}$ and $G_1 :X'(M^{|u|}/u)(G_0/v) \rightarrow \mathcal{P}(M^{|x'y'|}) \setminus \{\emptyset\}$ such that
$$M \models_{X''} A(uv/xy) \ja x'y'=xy \ja x'y' \sub uv$$
where $X'':= X'(M^{|u|}/u)(G_0/v)(G_1/x'y')$. We define
\begin{itemize}
\item $G_0(s) = \{s'(y) \mid  s' \in X', s'(x)=s(u)\}$, for $s \in X'(M^{|u|}/u) $,
\item $G_1(s) = \{s(xy)\}$, for $s \in X'(M^{|u|}/u)(G_0/v)$.
\end{itemize}
Now, if we define $V$ as the set of variables listed in $uv$, then $X'' \upharpoonright V$ consists of the assignments $uv \mapsto s(xy)$, for $s \in X'$. Therefore, since $xy$ lists $\Fr(A)$ and $M \models_{X'} A$, using Proposition \ref{loc} and Lemma \ref{freevar}, we obtain that $\M \models_{X''} A(uv/xy)$. Also by the construction, $\M \models_{X''} x'y'=xy \ja x'y' \sub uv$. This concludes the proof.

\end{itemize}
\end{proof}

\section{The Completeness Theorem}\label{complete}
In this section we will show that using our system of natural deduction we can derive all the first-order consequences of sentences of independence logic. Our proof is analogous to the proof of the corresponding dependence logic theorem in Kontinen and V\"a\"an\"anen \cite{juhajouko} which in turn builds on the earlier work of Barwise \cite{jon} by using first-order approximations in the completeness proof.
\subsection{The roadmap for the proof}
\begin{enumerate}
\item First we will show that from any independence logic sentence $\phi$ it is possible to derive an equivalent sentence of the form
\begin{equation}\label{norm}
\phi' = \forall x \on y (\bigwedge_{1 \leq i \leq m} u_i \bot_{w_i} v_i \ja \theta)
\end{equation}
where $x$ and $y$ are tuples of variables where each variable is quantified only once; $u_i$, $v_i$ and $w_i$ are tuples of existentially quantified variables and $\theta$ is a quantifier-free first-order formula.

\item The sentence $\phi'$ can be shown to be equivalent, in countable models, to the game expression
\begin{align*}
\Phi:=&\forall x_{0,0} \on y_{0,0} (\Psi^0 \ja \\
\forall x_{1,-1} \on y_{1,-1} &\on x_{1,0} \on y_{1,0} \ldots \on x_{1,p_1} \on y_{1,p_1} (\Psi^1 \ja \\
\forall  x_{2,-2} \on y_{2,-2} \on x_{2,-1} \on y_{2,-1} &\on x_{2,0} \on y_{2,0}\ldots \qquad\ldots  \on x_{2,p_2} \on y_{2,p_2} (\Psi^2 \ja \\
&\qquad\ldots \\
&\qquad\ldots \\
&\hspace{1cm} \ldots ))).
\end{align*}

In the game expression, $\Psi^0 := \theta_{0,0}$, and for $n \geq 1$,
\begin{align*}
\Psi^n :=&\bigwedge_{-n \leq i \leq p_n} \theta_{n,i} \ja \bigwedge_{-n+1 \leq i \leq p_{n-1}} x_{n,i} y_{n,i} = x_{n-1,i} y_{n-1,i} \ja \\
 &\bigwedge_{\substack{1 \leq i \leq m\\ -n \leq j,k \leq p_{n-1}}} (\pi^i_{n,j,k} \tai
 \bigvee_{p_{n-1} < l \leq p_n} u^i_{n,j} v^i_{n,k} w^i_{n,j} = u^i_{n,l} v^i_{n,l} w^i_{n,l}))
\end{align*}
where 
\begin{itemize}
\item $x_{j,k}$ and $x$ are tuples of same length and $y_{j,k}$ and $y$ are tuples of same length such that each variable in these tuples is quantified only once,
\item $\theta_{j,k}=\theta(x_{j,k}y_{j,k}/xy)$,
\item $e^i_{n,j} = e_i(x_{n,j}y_{n,j}/xy)$ for $e \in \{u,v,w\}$,
\item $ \pi^i_{n,j,k} =  \left\{\begin{array}{l l}
    \perp & \quad \textrm{if $w^i$ is empty}\\
    \neg w^i_{n,j}=w^i_{n,k} & \quad \textrm{otherwise}\\
  \end{array}\right.$,
\item $p_0 =0$ and $p_{n} = p_{n-1} + m(p_{n-1}+n+1)^2$, for $n \geq 1$.
\end{itemize}

The idea behind the game expression is that at level $n$, $x_{n,-n}y_{n,-n}$ introduces a new tuple of $M$, tuple $x_{n,i}y_{n,i}$, for $i=-n+1, \ldots ,p_{n-1}$, copies all the tuples introduced at the previous level and tuple $x_{n,i}y_{n,i}$, for $i=p_{n-1}+1, \ldots ,p_n$, confirms that the independence atoms hold between all the tuples $x_{n,i}y_{n,i}$ and $x_{n,j}y_{n,j}$ where $-n \leq i,j \leq p_{n-1}$.

\item The game expression $\Phi$ can be approximated by the first-order formulas
\begin{align*}
\Phi_n:=&\forall x_{0,0} \on y_{0,0} (\Psi^0 \ja \\
\forall x_{1,-1} \on y_{1,-1} &\on x_{1,0} \on y_{1,0} \ldots \on x_{1,p_1} \on y_{1,p_1} (\Psi^1 \ja \\
\forall  x_{2,-2} \on y_{2,-2} \on x_{2,-1} \on y_{2,-1} &\on x_{2,0} \on y_{2,0}\ldots \qquad\ldots  \on x_{2,p_2} \on y_{2,p_2} (\Psi^2 \ja \\
&\hspace{1cm}\ldots \\
&\hspace{1cm}\ldots \\
\forall x_{n,-n} \on y_{n,-n} \on x_{n,-n+1} \on y_{n,-n+1} \ldots\quad \ldots  &\on x_{n,0}\on y_{n,0} \ldots\hspace{1.2cm} \ldots   \on x_{n,p_n} \on y_{n,p_n} (\Psi^n) \ldots ))).
\end{align*}

\item Then we will show that these approximations can all be deduced from $\phi'$.

\item Then we note that for recursively saturated (or finite) models $M$, it holds that
$$M \models \Phi \leftrightarrow \bigwedge_{n} \Phi^n.$$

\item At last we show that for any $T \subseteq \I$ and $\phi \in$ FO:
$$T \models \phi \Leftrightarrow T \vdash \phi.$$
Suppose $T\nvdash \phi$. If $T^*$ consist of the first-order approximations of sentences of $T$, then $T^* \nvdash \phi$ and $T^*\cup \{\neg \phi\}$ is deductively consistent in first-order logic. Taking some countable recursively saturated model of $T^*\cup \{\neg \phi\}$, we have a model of $T \cup \{\neg \phi\}$ and hence $T \not\models \phi$.
\end{enumerate}

\subsection{From $\phi$ to $\phi'$}

In this section we are going to prove that from $\phi$ one can derive an equivalent formula $\phi'$ of the form
\begin{equation}\label{norm}
\forall x \on y (\bigwedge_{1 \leq i \leq m} u_i \bot_{w_i} v_i \ja \theta)
\end{equation}
where $x$ and $y$ are tuples of variables where each variable is quantified only once; $u_i$, $v_i$ and $w_i$ are tuples of existentially quantified variables and $\theta$ is a quantifier-free first-order formula.

\begin{prop}
Let $\phi$ be a sentence of independence logic. Then $\phi \vdash_{\I} \phi'$ where $\phi$ and $\phi'$ are logically equivalent and $\phi'$ is of the form (\ref{norm}).
\end{prop}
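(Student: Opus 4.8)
The plan is to prove, by induction on the construction of an arbitrary formula $\chi \in \I$, a slight strengthening of the claim: that there is a $\chi^{\ast}$ of the shape (\ref{norm}) with $\chi \vdash_{\I} \chi^{\ast}$, $\chi \equiv \chi^{\ast}$, and such that every variable occurring in an independence atom of $\chi^{\ast}$ lies in the existential block $y$ (for a sentence this last clause is exactly the requirement that the $u_i,v_i,w_i$ be existentially quantified). Throughout, I will use that rules \ref{rule1} and \ref{rule2} let me replace a disjunct inside a disjunction, that $\ja$-elimination and $\ja$-introduction let me replace a conjunct, that rule \ref{rule5} (with the bound variable renamed to itself) lets me replace a formula beneath a $\forall$, and that $\on$-elimination together with $\on$-introduction lets me do the same beneath an $\on$; so I may restructure subformula occurrences, not just whole formulas. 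Whenever I combine two already-normalised subformulas I first use the identity rules and Lemma \ref{freevar} to rename their bound variables apart. Finally — the device that secures the extra clause — whenever one of the rules \ref{rule3}, \ref{rule7}, \ref{rule6} introduces an independence atom mentioning a variable $w$ outside the current existential block (a universal variable of the prefix, or a free variable of $\chi$), I at once bring in, by $\on$-introduction with witnessing term $w$, a fresh existentially quantified copy $w'$ of $w$ together with the first-order conjunct $w' = w$, and use the identity rule \ref{rule14} to rewrite the atom over $w'$; this leaves the shape (\ref{norm}) intact.

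For the base cases: a first-order literal already has the shape (\ref{norm}) with empty quantifier blocks and no independence atoms. For an arbitrary first-order $\chi$ I would first derive, by ordinary first-order reasoning (available in the system since all formulas involved are first-order), a logically equivalent prenex formula, and then collapse its quantifier prefix to the form $\forall\cdots\forall\,\on\cdots\on$ by repeatedly pushing an existential block past the universal block immediately to its left using Lemma \ref{saannot}(7) (rule \ref{rule7}), and then the conjunctive scope rule $4'$ (the conjunctive analogue of rule \ref{rule4}) to carry the independence atom thereby created down to the matrix. For an independence atom $t_1 \bot_{t_3} t_2$ I would pass, by $\on$-introduction and the identity axiom and rules \ref{rule13} and \ref{rule14}, to the logically equivalent $\on u\,\on v\,\on w\,(u \bot_{w} v \ja u = t_1 \ja v = t_2 \ja w = t_3)$ with $u,v,w$ fresh, which is of the shape (\ref{norm}) and whose atom already involves only existential variables. (Dependence and inclusion atoms are, by the conventions of Section \ref{pre}, abbreviations of independence formulas, hence covered by the inductive clauses.)

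For the inductive step, take normal forms $\psi \equiv \forall x_1 \on y_1(I_1 \ja \theta_1)$ and $\rho \equiv \forall x_2 \on y_2(I_2 \ja \theta_2)$ with disjoint bound variables, $I_j$ a conjunction of independence atoms. If $\chi = \psi \ja \rho$, I pull all quantifiers to the front with the conjunctive scope rules $3'$ and $4'$ (the conjunctive analogues of rules \ref{rule3} and \ref{rule4}), reaching $\forall x_1 x_2 \on y_1 y_2 ((I_1 \ja I_2) \ja (\theta_1 \ja \theta_2))$. If $\chi = \on v\,\psi$, I move $\on v$ past $\forall x_1$ by Lemma \ref{saannot}(7) (creating an atom $x_1 \bot_{z} v$), absorb it under $\on y_1$ by $4'$ to get $\forall x_1 \on v y_1((I_1 \ja x_1 \bot_{z} v) \ja \theta_1)$, and then apply the copying step to the variables of $x_1 z$. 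If $\chi = \forall v\,\psi$, I just prepend $\forall v$ to the universal block. The hard case is $\chi = \psi \tai \rho$: here naive Boolean normalisation of the matrix is impossible, since disjunction elimination is unsound. After replacing $\psi,\rho$ by their normal forms with rules \ref{rule1} and \ref{rule2}, I pull the two leading universal blocks out of the disjunction with rule \ref{rule3}, and absorb the two independence atoms thereby created under the respective existential blocks with $4'$; the body of the formula, beneath $\forall x_1 x_2$, is then
\[
\on y_1 \bigl((I_1 \ja x_1 \bot s_1) \ja \theta_1\bigr) \;\tai\; \on y_2 \bigl((I_2 \ja x_2 \bot s_2) \ja \theta_2\bigr)
\]
(with $s_1,s_2$ the variable tuples prescribed by rule \ref{rule3}), which — after the renaming already performed — is an instance of the premise of rule \ref{rule6} with $C = \theta_1$, $D = \theta_2$. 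Applying rule \ref{rule6} turns this into the single formula $E$; every disjunction in $E$ is between first-order formulas (in particular each $\dep(z_i)$ abbreviates the independence atom $z_i \bot z_i$, which merges into the conjunction of independence atoms), so the matrix of $E$ is a conjunction of independence atoms together with one quantifier-free first-order formula, and its prefix is $\forall\forall\,\on\cdots\on$; hence $\forall x_1 x_2\, E$ has the shape (\ref{norm}), and a final copying step restores the extra clause.

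All the transformations invoked — first-order prenexing, rules \ref{rule3}, \ref{rule7}, $3'$, $4'$, \ref{rule6} (equivalences by Lemma \ref{saannot} and the lemmas preceding it), $\alpha$-renaming (Lemma \ref{freevar}), the base-case rewriting of independence atoms, and the copying step — preserve logical equivalence, and $\ja,\tai,\on,\forall$ are monotone for logical equivalence of their parts, so $\phi \equiv \phi'$ follows; and the derivation built along the induction witnesses $\phi \vdash_{\I} \phi'$ (Proposition \ref{soundnessthm} then giving a second route to $\phi \equiv \phi'$). The step I expect to be the main obstacle is the disjunction case: it is the one place where no honest structural move works and one must call on the heavy rule \ref{rule6}, which forces both disjuncts into exactly the required shape (leading universals removed by rule \ref{rule3}, auxiliary atoms absorbed by $4'$) and demands careful bookkeeping about which variables may occur in the newly created independence atoms — which is just what the copying step is designed to fix.
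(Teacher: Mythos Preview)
Your proof is correct and uses the same toolkit as the paper --- rules \ref{rule3}, \ref{rule4} (and their conjunctive analogues $3'$, $4'$), \ref{rule5}, \ref{rule6}, \ref{rule7}, together with the ``copying step'' of replacing a variable in an independence atom by a fresh existentially quantified copy plus an equality --- but organises the argument differently. The paper proceeds in four separated stages: (1) derive a prenex normal form $Q^1 x_{i_1}\cdots Q^n x_{i_n}\theta$ with $\theta$ quantifier-free in $\I$ (so still possibly containing independence atoms); (2) by a \emph{second} induction, now on the quantifier-free $\theta$, derive an equivalent $\forall\cdots\on\cdots(\text{atoms}\ja\text{FO})$, invoking rule \ref{rule6} in the disjunction case exactly as you do and using the copying trick to keep the atoms over existential variables; (3) splice the two derivations; (4) sort the outer quantifier prefix into $\forall\cdots\on\cdots$ shape via rule \ref{rule7}, then repair the new atoms again by copying. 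Your single structural induction interleaves these four stages, maintaining the target shape at every node of the formula tree.

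What each approach buys: your organisation is more uniform --- one induction, one invariant --- and makes transparent that the strengthened hypothesis (atoms mention only variables from the existential block) is exactly what is needed to feed rule \ref{rule6} at the next disjunction up. The paper's staging, on the other hand, isolates the roles of the heavy rules: rule \ref{rule6} is seen to be needed only for the Boolean structure of the matrix (Step 2), and rule \ref{rule7} only for rearranging the outer quantifier prefix (Step 4), whereas in your argument both are invoked repeatedly as the induction climbs through quantifiers and connectives. The two proofs are reorganisations of the same underlying construction.
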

\begin{proof}
We will prove the claim in several steps. Without loss of generality we may assume that in $\phi$ each variable is quantified only once.

\begin{itemize}
\item[Step 1] We derive from $\phi$ an equivalent sentence in prenex normal form
\begin{equation}\label{Q}
Q^1x_{i_1} \ldots Q^nx_{i_n}\theta
\end{equation}
where $Q^i\in \{\on ,\forall\}$ and $\theta$ is a quantifier-free formula.

We will prove this for every formula $\phi$ satisfying the assumption made in the beginning of the proof and the assumption that no variable appears both free (if $\phi$ has free variables) and bound in the formula. Now if $\phi$ is atomic or first-order formula, then the claim clearly holds. (In the latter case we know that our deduction system covers the natural first-order deduction system and in that system we can derive an equivalent formula in prenex normal form.) Also the cases of universal and existential quantifications are trivial. So we need only to consider the cases of disjunction and conjunction. We prove these cases by simultaneous induction.

Assume $\phi = \psi \tai \theta$ or $\phi =\psi \ja \theta$. By the induction assumption, we have derivations $\psi \vdash_{\I} \psi^*$ and $\theta \vdash_{\I} \theta^*$ where
\begin{eqnarray*}
\psi^* &=& Q^1 x_{i_1} \ldots Q^n x_{i_n} \psi_0, \\
\theta^* &=& Q^{n+1} x_{i_{n+1}} \ldots Q^{n+m}x_{i_{n+m}}\theta_0,
\end{eqnarray*}
and $\psi \equiv \psi^*$ and $\theta \equiv \theta^*$. If $\phi = \psi \tai \theta$, we can derive $\psi^* \tai \theta^*$ from $\phi$ using applications of rules 1 and 2. If $\phi = \psi \ja \theta$, we can derive $\psi^* \ja \theta^*$ from $\phi$ using applications of rules $\ja$ I and $\ja$ E. Next we prove by induction on $n$ that from $\psi^* \ja \theta^*$ we can derive an equivalent formula
\begin{equation}
Q^1x_{i_1} \ldots Q^nx_{i_n} Q^{n+1}x_{i_{n+1}} \ldots Q^{n+m}x_{i_{n+m}} (\psi_0 \ja \theta_0)
\end{equation}
and from $\psi^* \tai \theta^*$ we can derive an equivalent formula
\begin{equation}
Q^1x_{i_1} \ldots Q^nx_{i_n} Q^{n+1}x_{i_{n+1}} \ldots Q^{n+m}x_{i_{n+m}} (\psi_1 \tai \theta_1)
\end{equation}
where $\psi_1$ and $\theta_1$ are quantifier-free formulas. Let $n=0$. We prove this case also by induction, this time on $m$. For $m=0$ the claim holds. Suppose $m = k+1$ and the claim holds for $k$. We consider only the case where the connective is $\tai$ and $Q^1 = \forall$. The other cases are analogous, except that they are a bit easier. The following deduction shows the claim:
\begin{enumerate}
\item $\psi_0 \tai Q^1x_{i_1} \ldots Q^mx_{i_m} \theta_0$ 

\item $Q^1x_{i_1} \ldots Q^mx_{i_m} \theta_0 \tai \psi_0$ (rule 2)

\item $Q^1x_{i_1} ((Q^2x_{i_2} \ldots Q^mx_{i_m} \theta_0 \ja x_{i_1} \bot y) \tai \psi_0)$ (rule 3)

\item $Q^1x_{i_1} Q^2x_{i_2} \ldots Q^mx_{i_m} (\psi_1\tai\theta_1 )$ (rule 5 and D1)
\end{enumerate}
where D1 is the derivation
%substituutio%
\begin{enumerate}
\item $(Q^2x_{i_2} \ldots Q^mx_{i_m} \theta_0 \ja x_{i_1} \bot y) \tai \psi_0$

\item $\psi_0 \tai (Q^2x_{i_2} \ldots Q^mx_{i_m} \theta_0 \ja x_{i_1} \bot y)$ (rule 2)

\item $\psi_0 \tai Q^2x_{i_2} \ldots Q^mx_{i_m} ( x_{i_1} \bot y\ja\theta_0)$ (rule 1\footnote{Rule 1 can be applied since no extra assumptions are used in D2. In the sequel we  apply rule 1 analogously.} and D2)

\item \hspace{1cm} .

\item  \hspace{1cm} .

\item  \hspace{1cm} .

\item $Q^2x_{i_2} \ldots Q^mx_{i_m}  (\psi_1\tai\theta_1)$ (induction assumption)
\end{enumerate}
where D2 is the derivation
%disjunktio%
\begin{enumerate}
\item $Q^2x_{i_2} \ldots Q^mx_{i_m} \theta_0 \ja x_{i_1} \bot y$

\item $x_{i_1} \bot y \ja Q^2x_{i_2} \ldots Q^mx_{i_m} \theta_0$ ($\ja$ E and $\ja$ I)

\item  \hspace{1cm} .

\item \hspace{1cm} .

\item\hspace{1cm} .

\item $Q^2x_{i_2} \ldots Q^mx_{i_m} (x_{i_1} \bot y\ja\theta_0)$ (induction assumption)
\end{enumerate}
We can use the induction assumption in the deduction because $x_{i_j}$ are all different from each other and none of them are in tuple $y$. This concludes the proof for the case $n=0$.

Assume then that $n=l+1$ and that the claim holds for $l$. We show the claim in the case where the connective is $\tai$ and $Q^1=\forall$. The other cases are again analogous.

\begin{enumerate}
\item $Q^1x_{i_1}\ldots Q^nx_{i_n} \psi_0 \tai Q^{n+1}x_{i_{n+1}} \ldots Q^{n+m}x_{i_{n+m}} \theta_0$

\item $Q^1x_{i_1}((Q^2x_{i_2} \ldots Q^nx_{i_n} \psi_0 \ja x_{i_1} \bot y) \tai Q^{n+1}x_{i_{n+1}} \ldots Q^{n+m}x_{i_{n+m}} \theta_0)$ (rule 3)

\item $Q^1x_{i_1} \ldots Q^{n+m}x_{i_{n+m}} (\psi_1 \tai \theta_1)$ (rule 5 and D3)
\end{enumerate}

where D3 is the derivation
%substituutio%
\begin{enumerate}
\item $(Q^2x_{i_2} \ldots Q^nx_{i_n} \psi_0 \ja x_{i_1} \bot y) \tai Q^{n+1}x_{i_{n+1}} \ldots Q^{n+m}x_{i_{n+m}} \theta_0$

\item $Q^{n+1}x_{i_{n+1}} \ldots Q^{n+m}x_{i_{n+m}} \theta_0 \tai (Q^2x_{i_2} \ldots Q^nx_{i_n} \psi_0 \ja x_{i_1} \bot y)$ (rule 2)

\item $Q^{n+1}x_{i_{n+1}} \ldots Q^{n+m}x_{i_{n+m}} \theta_0 \tai Q^2x_{i_2} \ldots Q^nx_{i_n} (\psi_0 \ja x_{i_1} \bot y)$ (rule 1 and D4)

\item $Q^2x_{i_2} \ldots Q^nx_{i_n} (\psi_0 \ja x_{i_1} \bot y) \tai Q^{n+1}x_{i_{n+1}} \ldots Q^{n+m}x_{i_{n+m}} \theta_0$ (rule 2)

\item \hspace{1cm} .

\item  \hspace{1cm} .

\item \hspace{1cm} .

\item $Q^2x_{i_2} \ldots Q^{n+m}x_{i_{n+m}} (\psi_1 \tai \theta_1)$ (induction assumption)
\end{enumerate}

where D4 is the derivation
%disjunktio%
\begin{enumerate}
\item $Q^2x_{i_2} \ldots Q^nx_{i_n} \psi_0 \ja x_{i_1} \bot y$

\item  \hspace{1cm} .

\item \hspace{1cm} .

\item \hspace{1cm} .

\item $Q^2x_{i_2} \ldots Q^nx_{i_n} (\psi_0 \ja x_{i_1} \bot y)$ (induction assumption)
\end{enumerate}
This concludes the proof.
%STEP2% 
\item[Step 2] Next we show that from a quantifier-free formula $\theta$ one can derive an equivalent formula of the form
\begin{equation}\label{theta*}
\forall y_1 \ldots \forall y_{l} \on y_{l+1} \ldots \on y_{l+l'}(\bigwedge_{1 \leq i \leq m} u_i \bot_{w_i} v_i \ja \theta^*)
\end{equation}
where $\theta^*$ is a quantifier-free first-order formula and $u_i$, $v_i$ and $w_i$ are tuples of existentially quantified variables. We do this by induction on the complexity of the formula. If $\theta$ is first-order formula, then the claim holds. Assume that $\theta = t \bot_{t''} t'$ where $t$, $t$ and $t''$ are tuples of terms $(s_1, \ldots ,s_k)$, $(s_{k+1}, \ldots ,s_{k+k'})$ and $(s_{k+k'+1}, \ldots ,s_{k+k'+k''})$, respectively. Let $l=k+k'+k''$. Assume that $0 \leq n < l$ and we have already derived
\begin{equation}\label{ntimes}
\on y_1 \ldots \on y_{n} (t_{n} \bot_{t''_{n}} t'_{n}  \ja y_1 = s_1 \ja \ldots \ja y_{n} = s_{n})
\end{equation}
where $t_{i}$ refers to the tuple $t(y_1/s_1)\ldots(y_{i}/s_{i})$ and tuples $t'_{i}$ and $t''_{i}$ are defined analogously.

Let D5 be the derivation
\begin{enumerate}
\item $t_{n} \bot_{t''_n} t'_n \ja y_1 = s_1 \ja \ldots \ja y_{n} = s_{n}$

\item $t_{n} \bot_{t''_n} t'_n \ja y_1 = s_1 \ja \ldots \ja y_{n} = s_{n} \ja s_{n+1} = s_{n+1}$ (Here we obtain first,  by rule \ref{rule11},  a dummy $x=x$ from which we obtain  $s_{n+1} = s_{n+1}$ by rule \ref{rule13}. Then we apply $\ja$ I.)

\item $\on y_{n+1} (t_{n+1} \bot_{t''_{n+1}} t'_{n+1} \ja y_1 = s_1 \ja \ldots \ja y_{n} = s_{n} \ja y_{n+1} =s_{n+1})$ ($\on$ I)
\end{enumerate}

The last step can be done if we interpret the second formula as $\phi(s_{n+1}/y_{n+1})$ for
$$\phi = t_{n+1} \bot_{t''_{n+1}} t'_{n+1} \ja y_1 = s_1 \ja \ldots \ja y_{n} = s_{n} \ja y_{n+1} = s_{n+1}.$$

Using $n$ times rule $\on E$, once D5 and $n$ times rule $\on I$, we can derive
$$\on y_1 \ldots \on y_{n+1} (t_{n+1} \bot_{t''_{n+1}} t'_{n+1} \ja y_1 = t_1 \ja \ldots \ja y_{n+1} = s_{n+1})$$
from (\ref{ntimes}).

So from $\theta$ one can derive
\begin{equation}\label{atomistep}
\on y_1 \ldots \on y_{l} (t_{l} \bot_{t''_{l}} t'_{l}  \ja y_1 = s_1 \ja \ldots \ja y_{l} = s_{l})
\end{equation}
which is clearly equivalent to $\theta$ and of the required form.

Assume then that $\theta = \phi \tai \psi$. By the induction assumption, we have derivations $\phi \vdash_{\I} \phi^*$ and $\psi \vdash_{\I} \psi^*$ where
\begin{eqnarray}\label{phi^*}
\phi^* &=& \forall y_1 \on y_2  (\bigwedge_{1 \leq i \leq m_1} u_i \bot_{w_i} v_i \ja \phi_0),\\
\psi^* &=& \forall y'_1 \on y'_2 (\bigwedge_{1 \leq i \leq m_2} u'_i \bot_{w'_i} v'_i \ja \psi_0)\label{psi^*}
\end{eqnarray}
such that $\phi \equiv \phi^*$, $\psi \equiv \psi^*$, $\phi_0$ and $\psi_0$ are quantifier-free first-order formulas, $y_i$ and $y'_i$, for $i=1,2$, are tuples of bound variables such that none of these variables occur in both formulas or are quantified more than once, $e_i$ is a tuple of existentially quantified variables for $e \in \{u,v,w,u',v',w'\}$.

Now $\theta \vdash_{\I} \phi^* \tai \psi^*$. First we show by induction on the length of $y_1$ that from $\phi^* \tai \psi^*$ one can derive an equivalent formula of the form
\begin{eqnarray}\label{puoliksioikein}
\forall y_1 \forall y'_{1} (\on y_3 ( \bigwedge_{1 \leq i \leq m_3} u_i \bot_{w_i} v_i \ja \phi_1 )\tai
 \on y'_3 (\bigwedge_{1 \leq i \leq m_4} u'_i \bot_{w'_i} v'_i  \ja \psi_1))
\end{eqnarray}
where $\phi_1$ and $\psi_1$ are quantifier-free first-order formulas, $y_3$ and $y'_3$ are tuples of bound variables such that none of these variables are quantified more than once or occur free in the formula, $e_i$ is a tuple of existentially quantified variables for $e \in \{u,v,w,u',v',w'\}$.

Assume first that $\len (y_1)=0$. We show this case by induction on the length of $y'_1$. The case $\len (y'_1)=0$ is clear. Suppose $\len (y'_1) = k+1$. Let $y'_1=xy'_4$ where $\len (y'_4)=k$ and let $y$ be a tuple listing the free variables in $\phi^* \tai \psi^*$. The following deduction shows the claim.

\begin{enumerate}
\item $\on y_2 (\bigwedge_{1 \leq i \leq m_1} u_i \bot_{w_i} v_i \ja \phi_0) \tai
\forall y'_{1} \on y'_{2}(\bigwedge_{1 \leq i \leq m_2} u'_i \bot_{w'_i} v'_i \ja \psi_0)$

\item $\forall y'_{1} \on y'_2 (\bigwedge_{1 \leq i \leq m_2} u'_i \bot_{w'_i} v'_i \ja \psi_0)\tai
\on y_2  (\bigwedge_{1 \leq i \leq m_1} u_i \bot_{w_i} v_i \ja \phi_0)$ (rule 2)

\item $\forall x ((\forall y'_4 \on y'_2 (\bigwedge_{1 \leq i \leq m_2} u'_i \bot_{w'_i} v'_i \ja \psi_0) \ja x \bot y) \tai \on y_2  (\bigwedge_{1 \leq i \leq m_1} u_i \bot_{w_i} v_i \ja \phi_0))$ (rule 3)

\item $\forall y'_1 (\on y_3 (\bigwedge_{1 \leq i \leq m_3} u_i \bot_{w_i} v_i \ja \phi_1) \tai \on y'_3 (\bigwedge_{1 \leq i \leq m_4} u'_i \bot_{w'_i} v'_i \ja\psi_1))$ (rule 5 and D6)
\end{enumerate}

where D6 is the derivation

\begin{enumerate}
\item $(\forall y'_4 \on y'_2 (\bigwedge_{1 \leq i \leq m_2} u'_i \bot_{w'_i} v'_i \ja \psi_0) \ja x \bot y) \tai \on y_2  (\bigwedge_{1 \leq i \leq m_1} u_i \bot_{w_i} v_i \ja \phi_0)$

\item $\on y_2  (\bigwedge_{1 \leq i \leq m_1} u_i \bot_{w_i} v_i \ja \phi_0) \tai (\forall y'_4 \on y'_2 (\bigwedge_{1 \leq i \leq m_2} u'_i \bot_{w'_i} v'_i \ja \psi_0) \ja x \bot y)$ (rule 2)

\item $\on y_2  (\bigwedge_{1 \leq i \leq m_1} u_i \bot_{w_i} v_i \ja \phi_0) \tai \forall y'_4 \on y'_2 \on a \on b( \bigwedge_{1 \leq i \leq m_2} u'_i \bot_{w'_i} v'_i \ja a \bot b  \ja \psi_0 \ja ab=xy))$ (rule 1 and D7)

\item \hspace{1cm}.

\item \hspace{1cm}.

\item \hspace{1cm}.

\item $\forall y'_4 (\on y_3 (\bigwedge_{1 \leq i \leq m_3} u_i \bot_{w_i} v_i \ja \phi_1) \tai \on y'_3 (\bigwedge_{1 \leq i \leq m_4} u'_i \bot_{w'_i} v'_i \ja\psi_1))$ (induction assumption)
\end{enumerate}

where D7 is the derivation

\begin{enumerate}
\item $\forall y'_4 \on y'_2 (\bigwedge_{1 \leq i \leq m_2} u'_i \bot_{w'_i} v'_i \ja \psi_0) \ja x \bot y$

\item \hspace{1cm}.

\item \hspace{1cm}.

\item \hspace{1cm}.

\item $\forall y'_4 \on y'_2 (\bigwedge_{1 \leq i \leq m_2} u'_i \bot_{w'_i} v'_i \ja \psi_0) \ja \on a \on b(a \bot b \ja ab=xy)$ (Here we use $\ja$ E and $\ja$ I and deduce the second conjunct as we deduced (\ref{atomistep}) previously.)

\item \hspace{1cm} .

\item \hspace{1cm} .

\item \hspace{1cm} .

\item $\forall y'_4 \on y'_2 \on a \on b( \bigwedge_{1 \leq i \leq m_2} u'_i \bot_{w'_i} v'_i \ja a \bot b  \ja \psi_0 \ja ab=xy)$ (Using rules $\on$ I, $\on$ E, 5, $\ja$ I and $\ja$ E, we can drag the quantifiers to the left side of the formula and rearrange the quantifier-free part as we want.)
\end{enumerate}
This concludes the proof of this case.

Suppose then $\len (y_1)=n+1$. Let $y_1=xy_4$ where $\len (y_4)=n$ and let $y$ be a tuple listing the free variables in $\phi^* \tai \psi^*$. The following deduction shows the claim

\begin{enumerate}
\item $\forall y_1 \on y_2 (\bigwedge_{1 \leq i \leq m_1} u_i \bot_{w_i} v_i \ja \phi_0) \tai
\forall y'_{1} \on y'_{2}(\bigwedge_{1 \leq i \leq m_2} u'_i \bot_{w'_i} v'_i \ja \psi_0)$

\item $\forall x ((\forall y_4 \on y_2 (\bigwedge_{1 \leq i \leq m_1} u_i \bot_{w_i} v_i \ja \phi_0)\ja x \bot y) \tai
\forall y'_{1} \on y'_{2}(\bigwedge_{1 \leq i \leq m_2} u'_i \bot_{w'_i} v'_i \ja \psi_0))$ (rule 3)

\item $\forall y_1\forall y'_1(\on y_3 (\bigwedge_{1 \leq i \leq m_3} u_i \bot_{w_i} v_i \ja \phi_1) \tai \on y'_{3}(\bigwedge_{1 \leq i \leq m_4} u'_i \bot_{w'_i} v'_i \ja \psi_1)$ (rule 5 and D8)
\end{enumerate}

where D8 is the derivation

\begin{enumerate}
\item $(\forall y_4 \on y_2 (\bigwedge_{1 \leq i \leq m_1} u_i \bot_{w_i} v_i \ja \phi_0)\ja x \bot y) \tai
\forall y'_{1} \on y'_{2}(\bigwedge_{1 \leq i \leq m_2} u'_i \bot_{w'_i} v'_i \ja \psi_0)$

\item $\forall y'_{1} \on y'_{2}(\bigwedge_{1 \leq i \leq m_2} u'_i \bot_{w'_i} v'_i \ja \psi_0) \tai (\forall y_4 \on y_2 (\bigwedge_{1 \leq i \leq m_1} u_i \bot_{w_i} v_i \ja \phi_0)\ja x \bot y)$ (rule 2)

\item $\forall y'_{1} \on y'_{2}(\bigwedge_{1 \leq i \leq m_2} u'_i \bot_{w'_i} v'_i \ja \psi_0) \tai \forall y_4 \on y_2 \on a \on b (\bigwedge_{1 \leq i \leq m_1} u_i \bot_{w_i} v_i \ja a \bot b\ja \phi_0 \ja ab=xy)$ (rule 1 and D9)

\item $\forall y_4 \on y_2 \on a \on b (\bigwedge_{1 \leq i \leq m_1} u_i \bot_{w_i} v_i \ja a \bot b\ja \phi_0 \ja ab=xy) \tai$\\
$ \forall y'_{1} \on y'_{2}(\bigwedge_{1 \leq i \leq m_2} u'_i \bot_{w'_i} v'_i \ja \psi_0)$ (rule 2)

\item \hspace{1cm} .

\item \hspace{1cm} .

\item \hspace{1cm} .

\item $\forall y_4 \forall y'_1(\on y_3 (\bigwedge_{1 \leq i \leq m_3} u_i \bot_{w_i} v_i \ja \phi_1) \tai \on y'_{3}(\bigwedge_{1 \leq i \leq m_4} u'_i \bot_{w'_i} v'_i \ja \psi_1)$ (induction assumption)
\end{enumerate} 

where D9 is a derivation similar to D7. This concludes the claim.

Consider then the existential part of (\ref{puoliksioikein}) which is the formula
\begin{eqnarray}\label{qfpuoliksioikein}
\on y_3 ( \bigwedge_{1 \leq i \leq m_3} u_i \bot_{w_i} v_i \ja \phi_1 )\tai
 \on y'_3 (\bigwedge_{1 \leq i \leq m_4} u'_i \bot_{w'_i} v'_i  \ja \psi_1).
\end{eqnarray} 
With one application of rule \ref{rule6} we can derive from (\ref{qfpuoliksioikein}) an equivalent formula $\theta'$ of the form
\begin{align*}
&\forall \a \forall \b \on y_3\on y'_3 \on z_0\on z_1\on r [\bigwedge_{1 \leq i \leq m_3}  u_i \bot_{w_i r} v_i\ja \bigwedge_{1 \leq i \leq m_4}  u'_i \bot_{w'_i r} v'_i \ja\\ &\bigwedge_{i = 0,1}  \dep(z_i) \ja
(\neg  z_0 = z_1 \tai \a = \b) \ja ((\theta_0  \ja r = z_0) \tai (\theta_1 \ja r = z_1))].
\end{align*}
So together we can derive from (\ref{puoliksioikein}) an equivalent formula of the required form
\begin{eqnarray*}
\forall y_1 \forall y'_1 \theta'.
\end{eqnarray*}
This concludes the proof of the case $\theta = \phi \tai \psi$.

Suppose then $\theta = \phi \ja \psi$. By the induction assumption, $\phi \vdash_{\I}\phi^*$ and $\psi \vdash_{\I} \psi^*$ where $\phi^*$ and $\psi^*$ are as in (\ref{phi^*}) and (\ref{psi^*}). Now $\theta \vdash_{\I} \phi^* \ja \psi^*$, and using rule 5 and the first-order rules for $\on$ and $\ja$, it is possible to derive from $\phi^* \ja \psi^*$ an equivalent formula of the required form
\begin{equation*}\forall y_1 \forall y'_1 \on y_2 \on y'_2  (\bigwedge_{1 \leq i \leq m_1} u_i \bot_{w_i} v_i \ja \bigwedge_{1 \leq i \leq m_2} u'_i \bot_{w'_i} v'_i\ja \phi_0 \ja \psi_0).
\end{equation*}

Remembering items (3') and (4') in Lemma \ref{saannot}, it is obvious that the formulas are equivalent. This concludes the proof of Step 2.

\item[Step 3] The deductions in Step 1 and 2 (from $\phi$ to (\ref{Q}) and from $\theta$ to (\ref{theta*})) can be combined to show that
\begin{equation}\label{melk}
\phi \vdash_{\I} Q^1x_{i_1} \ldots Q^nx_{i_n} \forall y_1 \ldots \forall y_l \on y_{l+1} \ldots \on y_{l+l'}(\bigwedge_{1 \leq i \leq m} u_i \bot_{w_i} v_i \ja \theta^*).
\end{equation}

\item[Step 4] At last we can derive an equivalent formula of the form (\ref{norm}) from the formula (\ref{melk}) above. Using rule \ref{rule7} we can swap the places of existential and universal quantifiers which sit next to each other. Every swap gives us some new independence atom which we can push to conjunction 
\begin{equation*}
\bigwedge_{1 \leq i \leq m} u_i \bot_{w_i} v_i.
\end{equation*}
Pushing every universal quantifier in front of the formula and the new independence atoms to the quantifier-free part, we have a formula which is almost of the required form; every new independence atom has still variables that are not existentially quantified. We omit the proof of this part here because it is essentially the same than the proof of Step 4 in \cite{juhajouko}. Only exceptions are that rule \ref{rule7} is the independence logic version of the similar dependence logic rule and in place of $\forall$ E and $\forall$ I we use rule \ref{rule5}. After finishing this part we replace all the universally quantified variables in these new independence atoms as existentially quantified variables. This can be done easily just as we did it in Step 2 in the case of independence atoms.
\end{itemize}
Steps 1-4 show that from a sentence $\phi$ a logically equivalent sentence of the form (\ref{norm}) can be deduced.

\end{proof}

\subsection{Derivation of the approximations $\Phi^n$}
In the previous section we proved that from every sentence $\phi$ we can derive a logically equivalent sentence of the form
\begin{equation}\label{norm2}
\forall x \on y (\bigwedge_{1 \leq i \leq m} u_i \bot_{w_i} v_i \ja \theta)
\end{equation}
where $x$ and $y$ are tuples of variables; $u_i$, $v_i$ and $w_i$ are tuples of existentially quantified variables and $\theta$ is a quantifier-free first-order formula. Next we will show that the approximations $\Phi^n$ of the game expression $\Phi$ corresponding to the sentence (\ref{norm2}) can be deduced from it.

The formulas $\Phi$ and $\Phi^n$ are defined as follows.

\begin{maa}\label{appro}
Let $\phi$ be the formula (\ref{norm2}). For $j,k \in \Z$ and $1 \leq k \leq m$, we let:
\begin{itemize}
\item[-] $x$ and $x_{j,k}$ be variable tuples of same length and $y$ and $y_{j,k}$ be variable tuples of same length such that each variable occurs at most once in these tuples.
\item[-] $\theta_{j,k}=\theta(x_{j,k}y_{j,k}/xy)$ and $e^i_{j,k}=e_i(x_{j,k}y_{j,k}/xy)$ for $e \in \{u,v,w\}$.
\item[-] $ \pi^i_{n,j,k}=  \left\{\begin{array}{l l}
    \perp & \quad \textrm{if $w_i$ is empty}\\
    \neg w^i_{n,j}=w^i_{n,k} & \quad \textrm{otherwise}\\
  \end{array}\right.$
\item[-] $p_0=0$ and  $p_n = p_{n-1} +m(p_{n-1}+n+1)^2$ for $n \geq 1$.
\end{itemize} 
Also for $n \geq 1$, we define
%%%UUDET ALIGNIT TÄSSÄ
\begin{align}
C_n &:= \bigwedge_{-n \leq i \leq p_n} \theta_{n,i},\label{C}\\
D_n &:=\bigwedge_{-n+1 \leq i \leq p_{n-1}} x_{n,i} y_{n,i} = x_{n-1,i} y_{n-1,i} ,\label{D}\\
E_n &:= \bigwedge_{\substack{1 \leq i \leq m\\ -n \leq j,k \leq p_{n-1}}} (\pi^i_{n,j,k} \tai
 \bigvee_{p_{n-1} < l \leq p_n} u^i_{n,j} v^i_{n,k} w^i_{n,j} = u^i_{n,l} v^i_{n,l} w^i_{n,l})),\label{E}
\end{align}
and let
\begin{equation}\label{Psi}
\Psi^n:=C_n \ja D_{n} \ja E_n.
\end{equation}
In the case $n=0$, we let $\Psi^0: = \theta_{0,0}$.
\begin{itemize}
\item The infinitary formula $\Phi$ is now defined as:
\begin{align*}
&\forall x_{0,0} \on y_{0,0} (\Psi^0 \ja \\
\forall x_{1,-1} \on y_{1,-1} &\on x_{1,0} \on y_{1,0} \ldots \on x_{1,p_1} \on y_{1,p_1} (\Psi^1 \ja \\
\forall  x_{2,-2} \on y_{2,-2} \on x_{2,-1} \on y_{2,-1} &\on x_{2,0} \on y_{2,0}\ldots \qquad\ldots  \on x_{2,p_2} \on y_{2,p_2} (\Psi^2 \ja \\
&\hspace{1cm}\ldots \\
&\hspace{1cm}\ldots \\
&\hspace{1.2cm} \ldots ))).
\end{align*}
\item The $n$:th approximation $\Phi^n$ of $\phi$ is defined as:
\begin{align*}
&\forall x_{0,0} \on y_{0,0} (\Psi^0 \ja \\
\forall x_{1,-1} \on y_{1,-1} &\on x_{1,0} \on y_{1,0} \ldots \on x_{1,p_1} \on y_{1,p_1} (\Psi^1 \ja \\
\forall  x_{2,-2} \on y_{2,-2} \on x_{2,-1} \on y_{2,-1} &\on x_{2,0} \on y_{2,0}\ldots \qquad\ldots  \on x_{2,p_2} \on y_{2,p_2} (\Psi^2 \ja \\
&\hspace{1cm}\ldots \\
&\hspace{1cm}\ldots \\
\forall x_{n,-n} \on y_{n,-n} \on x_{n,-n+1} \on y_{n,-n+1} \ldots\quad \ldots  &\on x_{n,0}\on y_{n,0} \ldots\hspace{1.2cm} \ldots   \on x_{n,p_n} \on y_{n,p_n} (\Psi^n) \ldots ))).
\end{align*}
\end{itemize}
\end{maa} 

Next we will show that $\phi \vdash_{\I} \Phi^n$ for natural numbers $n$.
\begin{lau}\label{approt}
Let $\phi$ and $\Phi^n$ be as in Definition \ref{appro}. Then $\phi \vdash_{\I} \Phi^n$ for all $n \geq 0$.
\end{lau}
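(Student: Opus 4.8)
The plan is to argue by induction on $n$, with rule \ref{rule8} (independence transmission) doing the real work in the inductive step. It is convenient to prove a slightly stronger statement: for each $n$ I would derive a sentence $\hat{\Phi}^n$ which is $\Phi^n$ with the conjunction $\bigwedge_{1 \leq i \leq m} u^i_{n,-n}\bot_{w^i_{n,-n}}v^i_{n,-n} \ja \bigwedge_i x_{n,i}y_{n,i}\subseteq x_{n,-n}y_{n,-n}$ of independence and inclusion atoms inserted as an extra conjunct at its innermost ($n$-th) level, so that the innermost level of $\hat{\Phi}^n$ is, up to a relabelling of bound variables, an instance of the premise $A$ of rule \ref{rule8}. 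Since an inference rule can be applied underneath a prefix of quantifiers and conjunctions --- through $\forall$ via rule \ref{rule5} with the identity substitution, through $\on$ via $\on$ E followed by $\on$ I, and through $\ja$ via $\ja$ E and $\ja$ I, each sub-derivation using only the immediate subformula as an undischarged assumption --- these extra conjuncts can afterwards be discarded with $\ja$ E beneath the prefix, giving $\hat{\Phi}^n \vdash_{\I} \Phi^n$. So it suffices to show $\phi \vdash_{\I} \hat{\Phi}^n$ for every $n$.

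The base case $n=0$ is immediate: $\hat{\Phi}^0$ is just $\phi$ with its bound variables renamed so that $x$ becomes $x_{0,0}$ and $y$ becomes $y_{0,0}$ (hence $\theta$ becomes $\theta_{0,0}$ and the atoms $u_i\bot_{w_i}v_i$ become $u^i_{0,0}\bot_{w^i_{0,0}}v^i_{0,0}$, while the inclusion conjunction is empty). The renaming of the universal variable is an instance of rule \ref{rule5}, and the renaming of the existential block is carried out with $\on$ E and $\on$ I.

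For the inductive step I would assume $\phi \vdash_{\I} \hat{\Phi}^n$ and look at the innermost level of $\hat{\Phi}^n$: its quantifier prefix is $\forall x_{n,-n}\on y_{n,-n}\on x_{n,-n+1}\on y_{n,-n+1}\ldots\on x_{n,p_n}\on y_{n,p_n}$ and its matrix is $\bigwedge_i u^i_{n,-n}\bot_{w^i_{n,-n}}v^i_{n,-n} \ja \bigwedge_i x_{n,i}y_{n,i}\subseteq x_{n,-n}y_{n,-n} \ja \bigwedge_i \theta_{n,i} \ja D_n$, where $D_n$ gathers the copy-equalities and confirmation conjuncts coming from levels up to $n$ (and is trivial when $n=0$) and contains none of the variables of the level about to be introduced. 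After relabelling this level's indices $\{-n,\ldots,p_n\}$ as $\{0,\ldots,p_n+n\}$ (once more with rule \ref{rule5} and $\on$ E/$\on$ I), the subformula is literally of the form of the premise of rule \ref{rule8}, with the rule's parameter $p$ equal to $p_n+n$, with $C$ the template $\theta(x_{n,-n}y_{n,-n}/xy)$, and with $D=D_n$; the side conditions of rule \ref{rule8} hold by construction. One application of rule \ref{rule8} beneath the outer prefix then produces the rule's conclusion $B$. Since rule \ref{rule8} takes $p' = p + m(p+2)^2 = (p_n+n) + m(p_n+n+2)^2$, undoing the relabelling on the newly introduced level converts $p'$ into exactly $p_{n+1} = p_n + m(p_n+(n+1)+1)^2$ new witnesses and the confirmation range $-1 \leq j,k \leq p$ into $-(n+1) \leq j,k \leq p_n$; after trimming the handful of conjuncts that the conclusion $B$ carries but $\hat{\Phi}^{n+1}$ does not (such as the inclusion atom at the extra index $0$), the result is exactly $\hat{\Phi}^{n+1}$, completing the induction.

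The main obstacle is not any single clever step but the combinatorial bookkeeping around rule \ref{rule8}: one has to check that the quadratic bound $m(p+2)^2$ on the number of confirmation witnesses the rule produces matches, after the shift of index convention, the recurrence $p_n = p_{n-1} + m(p_{n-1}+n+1)^2$ of Definition \ref{appro}; that rule \ref{rule8}'s freshness and variable-location side conditions are met at each application; and that the deduction rules may in fact be applied underneath the ever-growing prefix of quantifiers and conjunctions. The proof of the analogous statement for dependence logic in \cite{juhajouko} should serve as a guide, the essential new point being that rule \ref{rule8} must now transmit genuine independence (and inclusion) atoms rather than dependence atoms.
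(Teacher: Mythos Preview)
Your proposal is correct and follows essentially the same strategy as the paper: the paper also strengthens the induction hypothesis by proving $\phi \vdash_{\I} \Omega^n$, where $\Omega^n$ is exactly your $\hat{\Phi}^n$ (namely $\Phi^n$ with the extra conjunct $\Upsilon^n = \bigwedge_i u^i_{n,-n}\bot_{w^i_{n,-n}} v^i_{n,-n} \ja \bigwedge_i x_{n,i}y_{n,i}\subseteq x_{n,-n}y_{n,-n}$ at the innermost level), and the inductive step is a single application of rule \ref{rule8} to that innermost block, carried out beneath the outer prefix via rule \ref{rule5} together with $\on$ E/$\on$ I and $\ja$ E/$\ja$ I, exactly as you describe. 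Your explicit index relabelling and the check that $p' = (p_n+n) + m(p_n+n+2)^2$ unwinds to $p_{n+1}$ are details the paper leaves implicit; conversely, the ``trimming'' you mention is not actually needed, since after the shift the conclusion of rule \ref{rule8} already coincides with the last two lines of $\Omega^{n+1}$.
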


\begin{proof}
%%%%%TÄSTÄ LÄHTEE UUDET
First we define
\begin{align}
A_n &:= \bigwedge_{1 \leq i \leq m} u^i_{n,-n} \bot_{w^i_{n,-n}} v^i_{n,-n},\label{A}\\
B_n &:=  \bigwedge_{-n+1 \leq i \leq p_n} x_{n,i}y_{n,i} \subseteq x_{n,-n}y_{n,-n},\label{B}
\end{align}
and let
\begin{align}\label{Upsilon}
\Upsilon^n := A_n \ja B_n.
\end{align}
Notice that
$$\Upsilon^0 = \bigwedge_{1 \leq i \leq m} u^i_{0,0} \bot_{w^i_{0,0}} v^i_{0,0}.$$

We will prove a bit stronger claim stating that $\phi \vdash \Omega^n$ where $\Omega^n$ is defined otherwise as $\Phi^n$ except that in the last line we also have the formula $\Upsilon^n$. Hence $\Omega^n$ is of the form 
\begin{align*}
&\forall x_{0,0} \on y_{0,0} (\Psi^0 \ja \\
\forall x_{1,-1} \on y_{1,-1} &\on x_{1,0} \on y_{1,0} \ldots \on x_{1,p_1} \on y_{1,p_1} (\Psi^1 \ja \\
\forall  x_{2,-2} \on y_{2,-2} \on x_{2,-1} \on y_{2,-1} &\on x_{2,0} \on y_{2,0}\ldots \qquad\ldots  \on x_{2,p_2} \on y_{2,p_2} (\Psi^2 \ja \\
&\hspace{1cm}\ldots \\
&\hspace{1cm}\ldots \\
\forall x_{n,-n} \on y_{n,-n} \on x_{n,-n+1} \on y_{n,-n+1} \ldots\quad \ldots  &\on x_{n,0}\on y_{n,0} \ldots\hspace{1.2cm} \ldots   \on x_{n,p_n} \on y_{n,p_n} (\Upsilon^n\ja \Psi^n) \ldots ))).
\end{align*}

It is not hard to see that we can deduce $\Phi^n$ from $\Omega^n$ so proving this claim suffices. We prove the claim by induction on $n$. For $n=0$ the claim holds, since $\phi = \Omega^0$. 

Assume then that $\phi \vdash_{\I} \Omega^{h}$; we will show that  $\phi \vdash_{\I} \Omega^{n}$ where $n:=h+1$. By the induction assumption, it suffices to show that $\Omega^{h} \vdash_{\I} \Omega^{n}$. Moreover, for this it suffices to show that from the last line of $\Omega^h$, that is
\begin{align}\label{application8}
\forall x_{h,-h} \on y_{h,-h} \on x_{h,-h+1} \on y_{h,-h+1} \ldots \on x_{h,p_h} \on y_{h,p_h} (\Upsilon^h \ja \Psi^h ),
\end{align}
one can deduce
\begin{align}
&\forall x_{h,-h} \on y_{h,-h} \on x_{h,-h+1} \on y_{h,-h+1} \ldots \on x_{h,p_h} \on y_{h,p_h} (\Psi^{h} \ja \label{johdettu} \\
\forall x_{n,-n} \on y_{n,-n}& \on x_{n,-h} \on y_{n,-h}\on x_{n,-h+1} \on y_{n,-h+1}  \ldots \qquad\ldots\on x_{n,p_n} \on y_{n,p_n}(\Upsilon^{n} \ja \Psi^{n})). \notag
\end{align} 
For, we first use repeatedly rules $\on$ E, $\ja$ E and the "elimination" part of rule \ref{rule5} in order to reach \eqref{application8} from $\Omega_h$. Then having derived \eqref{johdettu}, we can do the reverse, that is, we use rules $\on$ I, $\ja$ I and the "introduction" part of rule \ref{rule5} to obtain $\Omega^n$.

We will show how to deduce \eqref{johdettu} from \eqref{application8} in two steps. 
In Step A we will deduce from  \eqref{application8}
\begin{align}
&\forall x_{h,-h} \on y_{h,-h} \on x_{h,-h+1} \on y_{h,-h+1} \ldots \on x_{h,p_h} \on y_{h,p_h} (\Psi^{h} \ja \label{i} \\
\forall x_{n,-n} \on y_{n,-n}& \on x_{n,-h} \on y_{n,-h}\on x_{n,-h+1} \on y_{n,-h+1} \ldots \on x_{n,p_h} \on y_{n,p_h}(A_n \ja B^- \ja  C^- \ja D_{n})). \notag
\end{align}
where 
\begin{align}
B^-&:=   \bigwedge_{-n+1 \leq i \leq p_h} x_{n,i}y_{n,i} \subseteq x_{n,-n}y_{n,-n},\label{B-}\\
C^- &:= \bigwedge_{-n \leq i \leq p_h} \theta_{n,i}.\label{C-}
\end{align}
Then in Step B  we will show how to deduce from \eqref{i}
\begin{align}
&\forall x_{h,-h} \on y_{h,-h} \on x_{h,-h+1} \on y_{h,-h+1}  \ldots \on x_{h,p_h} \on y_{h,p_h} (\Psi^{h} \ja \label{ii} \\
\forall x_{n,-n} \on y_{n,-n} &\on x_{n,-h} \on y_{n,-h} \on x_{n,-h+1} \on y_{n,-h+1}\ldots \quad \ldots \on x_{n,p_n} \on y_{n,p_n}(A_n \ja B^- \ja B^+ \ja\notag\\
&\hspace{7.7cm}  C^- \ja C^+ \ja D_{n} \ja E_n)). \notag
\end{align}
where 
\begin{align*}
B^+&:=   \bigwedge_{p_h+1 \leq i \leq p_n} x_{n,i}y_{n,i} \subseteq x_{n,-n}y_{n,-n},\\
C^+ &:= \bigwedge_{p_h +1 \leq i \leq p_n} \theta_{n,i}.\\
\end{align*}
Note that at the second level of \eqref{ii} we introduce new existentially quantified tuples $x_{n,p_h+1},y_{n,p_h+1},$\\ $\ldots , x_{n,p_n} , y_{n,p_n}$. Also note that \eqref{johdettu} and \eqref{ii} are identical by the definitions \eqref{Psi} and \eqref{Upsilon}, and since $B^- \ja B^+ = B_{n}$ and $C^-\ja C^+ = C_n$.
\paragraph{Step A}
In this step we will show how to deduce \eqref{i} from \eqref{application8}. Again, using back and forth rule \ref{rule5}, $\on$ E, $\on$ I, $\ja $ E and $\ja $ I we  can first duplicate $C_n$ and deduce
\begin{align*}
\forall x_{h,-h} \on y_{h,-h} \on x_{h,-h+1} \on y_{h,-h+1} \ldots \on x_{h,p_h} \on y_{h,p_h} (\Upsilon^h \ja C_h \ja \Psi^h)
\end{align*}
from \eqref{application8}. Then, interpreting $\Upsilon^h \ja C_h $ as $A$ and $\Psi^h$ as $B$, we deduce, by rule \ref{rule10},
\begin{align}
&\forall x \on y\big (\Psi^{h} \ja\label{i1} \\
\forall u \on v &\on x' \on y'(\Upsilon^h(uv/xy) \ja C_h(uv/xy)  \ja x'y'=xy \ja x'y' \sub uv)\big )\notag
\end{align}
where
\begin{align*}
x&:= x_{h,-h} ,\\
y&:= y_{h,-h}  x_{h,-h+1}  y_{h,-h+1}\ldots   x_{h,p_h}  y_{h,p_h},\\
u&:=x_{n,-n},\\
v&:= y_{n,-n}  a_{-h+1}  b_{-h+1} \ldots  a_{p_h}  b_{p_h},\\
x'&:=x_{n,-h},\\
y'&:= y_{n,-h}  x_{n,-h+1}  y_{n,-h+1}\ldots   x_{n,p_h}  y_{n,p_h}.
\end{align*}
Here the idea is that we will first show how to derive 
\begin{equation}\label{iz}
A_n \ja B^- \ja C^- \ja D_{n}
\end{equation} from
\begin{equation}\label{ix}
\Upsilon^h(uv/xy) \ja C_h(uv/xy)  \ja x'y'=xy \ja x'y' \sub uv.
\end{equation}
Then we will obtain \eqref{i} by dropping $a_i$ and $b_i$ from the quantifier prefix.

For the first objective, recall that $\Upsilon^h = A_h \ja B_h$ and note that by the definition \eqref{A}, 
\begin{equation*}
A_h(uv/xy) = A_n.
\end{equation*} Also by \eqref{B} we obtain that
\begin{equation}\label{i3}
B_h(uv/xy) =   \bigwedge_{-h+1 \leq i \leq p_h} a_ib_i \subseteq x_{n,-n}y_{n,-n}.
\end{equation}
Then by projection and permutation and $\ja$ I we  derive
\begin{equation}\label{i4}
x_{n,-h}y_{n,-h} \sub x_{n,-n}y_{n,-n} \ja \bigwedge_{-h+1 \leq i \leq p_h} x_{n,i}y_{n,i} \sub a_i b_i
\end{equation}
from $x'y'\sub uv$. Now using transitivity and the conjunction rules we obtain $B^-$, defined in \eqref{B-}, from \eqref{i3} and \eqref{i4}. For the derivation of $C^-$, first note that by the definition \eqref{C},
\begin{equation}\label{i5}
C_h(uv/xy)= \theta_{n,-n} \ja \bigwedge_{-h+1\leq i \leq p_h}\theta(a_ib_i/xy).
\end{equation}
Hence, by rule \ref{rule8} and the conjunction rules, we obtain $C^-$, defined in \eqref{C-}, from $B^-$ and the first conjunct of \eqref{i5}. Since $D_n $ is $x'y'= xy$ by the definitions (see \eqref{D} and the previous page), we have deduced \eqref{iz} from \eqref{ix}. Using this, we conclude that  \eqref{i} can be deduced from \eqref{i1} by applying back and forth rule \ref{rule5}, $\on$ E, $\on$ I, $\ja $ E and $\ja $ I. In particular, since no variable that is listed in $a_{-h+1}b_{-h+1}\ldots a_{p_h}b_{p_h}$ appears in \eqref{iz}, we may drop these variables from the quantifier prefix when applying rule $\on $ E. This concludes Step A.

\paragraph{Step B} 
In this step we will show how to deduce \eqref{ii} from \eqref{i}. For this, it suffices to show by induction on $p_h \leq q \leq p_n$ that from \eqref{i} one can derive
\begin{align}
&\forall x_{h,-h} \on y_{h,-h} \on x_{h,-h+1} \on y_{h,-h+1} \ldots\on x_{h,p_h} \on y_{h,p_h} (\Psi^{h} \ja \label{ii1} \\
\forall x_{n,-n} \on y_{n,-n} &\on x_{n,-h} \on y_{n,-h}\on x_{n,-h+1} \on y_{n,-h+1} \ldots\quad\ldots \on x_{n,q} \on y_{n,q}(A_n \ja B(q) \ja  C(q) \ja%\notag\\
%&\hspace{9.1cm} 
D_{n} \ja E(q))). \notag
\end{align}
where 
\begin{align*}
B(q)&:=   \bigwedge_{-n+1 \leq i \leq q} x_{n,i}y_{n,i} \subseteq x_{n,-n}y_{n,-n},\\
C(q) &:= \bigwedge_{-n \leq i \leq q} \theta_{n,i},\\
E(q) &:= \bigwedge_{(i,j,k) \in S_q} (\pi^i_{n,j,k} \tai
 \bigvee_{p_{h} < l \leq q} u^i_{n,j} v^i_{n,k} w^i_{n,j} = u^i_{n,l} v^i_{n,l} w^i_{n,l})),
\end{align*}
and $S_q$ is the initial segment (in the lexicographic order) of $\{1, \ldots ,m\}\times \{-n ,\ldots ,p_{h}\}^2$  of size $q-p_h$. This is due to the fact that \eqref{ii1} and \eqref{ii} are identical if $q= p_n$. For this, recall that $p_n - p_h = |\{1, \ldots ,m\}\times \{-n ,\ldots ,p_{h}\}^2|$.

Next we will prove the induction claim. If $q=p_h$, then \eqref{i} and \eqref{ii1} are identical, and therefore the claim holds. Let then $p_h \leq q < p_n$, and assume the claim for $q$. For showing the claim for $q+1$, it suffices to show that from \eqref{ii1} one can deduce 
\begin{align}
&\forall x_{h,-h} \on y_{h,-h} \on x_{h,-h+1} \on y_{h,-h+1} \ldots \on x_{h,p_h} \on y_{h,p_h} (\Psi^{h} \ja \label{ii2} \\
\forall x_{n,-n} \on y_{n,-n} &\on x_{n,-h} \on y_{n,-h} \on x_{n,-h+1} \on y_{n,-h+1}\ldots\quad\ldots \on x_{n,q+1} \on y_{n,q+1}(A_n \ja B(q+1) \ja  % \notag\\
%&\hspace{7cm}
C(q+1) \ja \notag\\
&\hspace{10.4cm} D_{n} \ja E(q+1))). \notag
\end{align}
Again, it suffices to show that we can deduce
\begin{equation}\label{ii3}
\on x_{n,q+1} \on y_{n,q+1}(A_n \ja B(q+1) \ja  C(q+1) \ja D_{n} \ja E(q+1))
\end{equation}
from
\begin{equation}\label{ii4}
A_n \ja B(q) \ja  C(q) \ja D_{n} \ja E(q).
\end{equation}
Let $(I,J,K) \in S_{q+1}\setminus S_q$. First we obtain from $\eqref{ii4}$
\begin{equation}\label{ii5}
u^I_{n,-n} \bot_{w^I_{n,-n}} v^I_{n,-n} \ja \bigwedge_{i\in \{J,K\}} x_{n,i}y_{n,i} \sub x_{n,-n}y_{n,-n}
\end{equation}
by $\ja$ E and $\ja$ I (and reflexivity if either $J$ or $K$ is $-n$). Then we derive from \eqref{ii5} with one application of rule \ref{rule9},
\begin{align}\label{ii6}
\on x_{n,q+1} \on y_{n,q+1} (x_{n,q+1}y_{n,q+1} \sub x_{n,-n}y_{n,-n} \ja 
(\pi^I_{n,J,K} \tai  u^I_{n,J} v^I_{n,K} w^I_{n,J} = u^I_{n,q+1} v^I_{n,q+1} w^I_{n,q+1}))
\end{align}
Note that from $x_{n,q+1}y_{n,q+1} \sub x_{n,-n}y_{n,-n}$ and $\theta_{n,-n}$ we can deduce $\theta_{n,q+1}$ by rule \ref{rule8}, and from $$\pi^I_{n,J,K} \tai  u^I_{n,J} v^I_{n,K} w^I_{n,J} = u^I_{n,q+1} v^I_{n,q+1} w^I_{n,q+1}$$ we can deduce 
$$\pi^I_{n,J,K} \tai  \bigvee_{p_h < l \leq q+1}u^I_{n,J} v^I_{n,K} w^I_{n,J} = u^I_{n,l} v^I_{n,l} w^I_{n,l}$$
by $\tai$ E and $\tai$ I. Hence it is now easy to see that we can deduce \eqref{ii3} from \eqref{ii4} and \eqref{ii6} by using rule \ref{rule8} and the elimination and introduction rules of $\on$, $\ja$ and $\tai$. Therefore, it follows that \eqref{ii2} can be deduced from \eqref{ii1}. This concludes the induction proof and therefore Step B.
 We have now showed in Step A and Step B that \eqref{i} can be deduced from \eqref{application8}, and \eqref{ii} from \eqref{i}. Since \eqref{ii} and \eqref{johdettu}  are identical, we have showed that \eqref{johdettu} can be deduced from \eqref{application8}.
Therefore, we conclude that $\Omega^{h} \vdash_{\I} \Omega^{n}$ when by the induction assumption $\phi \vdash_{\I} \Omega^{n}$. This concludes the proof of Theorem \ref{approt}.

\end{proof}

\section{Back from approximations}

\begin{prop}\label{coun}
Let $\phi$ be as in (\ref{norm2}) and $\Phi$ as in Definition \ref{appro}. Then $\phi \models \Phi$ and in countable models $\Phi \models \phi$.
\end{prop}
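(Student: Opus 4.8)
The plan is to prove the two implications separately, each by an explicit construction. In both cases Locality lets us work with the team $\{\emptyset\}$, and the key structural fact is that every $\Psi^n$ (indeed every $\theta_{n,i}$ and every confirmation conjunct) is first-order, hence holds in a team exactly when it holds at each of its assignments. Recall that $M\models_{\{\emptyset\}}\phi$ means there is a function $F$ sending each $\vec a\in M^{|x|}$ to a \emph{nonempty} $F(\vec a)\subseteq M^{|y|}$ such that $Y:=\{\emptyset\}(M/x)(F/y)$ satisfies $\bigwedge_{1\le i\le m}u_i\bot_{w_i}v_i$ and $\theta$; and $M\models_{\{\emptyset\}}\Phi$ means there are supplementing functions, one for the block of existential quantifiers at each level $n$, such that the team $T_n$ obtained through level $n$ satisfies $\Psi^n$, for every $n$. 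Throughout I identify a value-tuple for $x_{n,i}y_{n,i}$ with one for $xy$ by Lemma~\ref{freevar}.

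For $\phi\models\Phi$ I would define the supplementing functions level by level, keeping the invariant that in $T_n$ every tuple $x_{n,i}y_{n,i}$ ($-n\le i\le p_n$) only takes values that are rows of $Y$. At level $0$ I use $\vec a\mapsto F(\vec a)$, so $T_0$ is a copy of $Y$ and $\Psi^0=\theta_{0,0}$ holds. At level $n\ge1$, once $x_{n,-n}$ has received a value $\vec a$, let the supplementing function return the set of \emph{all} value-tuples for $y_{n,-n}x_{n,-n+1}y_{n,-n+1}\cdots x_{n,p_n}y_{n,p_n}$ such that $y_{n,-n}\in F(\vec a)$, the copying equalities hold, each $x_{n,l}y_{n,l}$ is a row of $Y$, and the confirmation conjuncts of $\Psi^n$ hold. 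This set is nonempty: take $y_{n,-n}\in F(\vec a)\neq\emptyset$, copy the previous tuples (rows of $Y$ by the invariant), fix a bijection between the confirming indices $l\in(p_{n-1},p_n]$ and the triples $(i,j,k)$ with $1\le i\le m$, $-n\le j,k\le p_{n-1}$ (equinumerous by the choice of $p_n$), and set the $l$-th confirming tuple, when $\pi^i_{n,j,k}$ fails, to a row of $Y$ witnessing $u_i\bot_{w_i}v_i$ for the already-fixed $j$-th and $k$-th tuples — available since $M\models_Y u_i\bot_{w_i}v_i$ — and to an arbitrary row of $Y$ otherwise. Every member of this set satisfies $\Psi^n$ and preserves the invariant, so $M\models_{\{\emptyset\}}\Phi$; returning the set of all valid choices avoids any appeal to choice.

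For $\Phi\models\phi$ in a countable $M$ I would fix a surjective enumeration $M^{|x|}=\{\vec c_0,\vec c_1,\dots\}$ and supplementing functions witnessing $M\models_{\{\emptyset\}}\Phi$, and then run a single thread of the game: at level $0$ let $x_{0,0}:=\vec c_0$ and pick one element of its (nonempty) witness set, and at level $n\ge1$ let $x_{n,-n}:=\vec c_n$ and again pick one element. Using countability (or dependent choice) this produces an increasing chain of single assignments $\rho_0\subseteq\rho_1\subseteq\cdots$ with $\rho_n$ a row of $T_n$, so $M\models_{\rho_n}\Psi^n$. The copying conjuncts make $\rho_n(x_{n,i}y_{n,i})$ independent of $n$ once the index $i$ has appeared; call its stable value $v_i$. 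Put $F(\vec a):=\{\vec d:(\vec a,\vec d)=v_i\text{ for some }i\}$ and $Y:=\{\emptyset\}(M/x)(F/y)$. Then $F(\vec a)\neq\emptyset$ for every $\vec a$, since $\vec a=\vec c_n$ for some $n$ and $v_{-n}$ (or $v_0$) has $x$-part $\vec c_n$ (resp. $\vec c_0$); every $v_i$ satisfies $\theta$ via the conjunct $\theta_{n,i}$ of the pertinent $\Psi^n$, so $M\models_Y\theta$; and given $s,s'\in Y$ with equal $w_i$-part, writing $s=v_j$, $s'=v_k$ and choosing $N$ so large that $j,k\in[-N,p_{N-1}]$, the confirmation conjunct of $\Psi^N$ at $\rho_N$ — where $\pi^i_{N,j,k}$ is false because $\rho_N(w^i_{N,j})=\rho_N(w^i_{N,k})$ — yields $l\in(p_{N-1},p_N]$ with $\rho_N(u^i_{N,l}v^i_{N,l}w^i_{N,l})=\rho_N(u^i_{N,j}v^i_{N,k}w^i_{N,j})$, so $v_l\in Y$ is exactly the witness required by $u_i\bot_{w_i}v_i$. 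Hence $M\models_{\{\emptyset\}}\phi$.

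The delicate point, and the reason the second implication needs countability, is the realisation that the witness team for $\phi$ must be harvested from a \emph{single} infinite play in which the universal player exhausts $M^{|x|}$, so that any two of its rows occur together inside a common $\rho_N$ and the confirmation conjuncts become applicable — extracting it from the full approximating teams $T_n$ would not work. The remaining bookkeeping is only the matching of confirming indices with triples $(i,j,k)$ at each level, which is exactly calibrated by the recursion $p_n=p_{n-1}+m(p_{n-1}+n+1)^2$.
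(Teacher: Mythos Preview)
Your proposal is correct and follows essentially the same approach as the paper: for $\phi\models\Phi$ you use the witnessing team $Y$ for $\phi$ to supply the existential witnesses level by level while keeping every tuple a row of $Y$, and for the converse you let the universal player enumerate $M^{|x|}$ along a single play and harvest the team from the stabilised values $v_i$, verifying the independence atoms via the confirmation conjuncts at a sufficiently late level. The only cosmetic difference is that the paper phrases the semantics of the infinitary $\Phi$ game-theoretically (a winning strategy for player~II) whereas you phrase it via LAX-style supplementing functions into teams $T_n$; since every $\Psi^n$ is first-order these readings coincide, and your device of returning \emph{all} admissible tuples in the forward direction is a clean way to avoid the appeal to choice.
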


\begin{proof}
Assume that $M \models \phi$. We show $M \models \Phi$. The truth of $\Phi$ in $M$ means that there is a winning strategy for player II in the following game

\begin{center}
$$
\begin{array}{c|cccccc}
\textrm{I} & a_{0,0} &           & a_{1,-1} & 		& \ldots\\
\hline
\textrm{II} &           &b_{0,0} &          & b_{1,-1}a_{1,0}b_{1,0}\ldots a_{1,n_1}b_{1,n_1} & 	& \ldots\\ 
\end{array}$$
\end{center}
where $a_{n,i}$, $b_{n,i}$ are tuples chosen from $M$ and player II wins if the assignment $s(x_{n,i})=a_{n,i}$, $s(y_{n,i})=b_{n,i}$ satisfies $\Psi^n$ in (\ref{appro}) for all $n$.

Let $x$ and $y$ be tuples of sizes $r$ and $r'$, respectively. Since $M \models \phi$, there is a function $F: \{\emptyset\}(M^r/x) \rightarrow \mathcal{P}(M^{r'})$ such that if $X = \{\emptyset\}(M^r/x)(F/y)$, then
\begin{equation}\label{atomhold}
M \models_{X} \bigwedge_{1 \leq i \leq m} u_i \bot_{w_i} v_i \ja \theta.
\end{equation}
 
We will now construct a winning strategy for player II recursively so that for each round $n$ the assignment $s(x)=a_{n,i}$, $s(y)=b_{n,i}$ is in $X$. 
\begin{itemize}
\item If $n=0$ and player I has played $a_{0,0}$, then player II chooses $b_{0,0}$ to be any member of $F(s)$ where $s(x)=a_{0,0}$. The assignment $s(x)=a_{0,0}$, $s(y)=b_{0,0}$ is in $X$ and $M \models_X \theta$. Thus the assignment $s(x_{0,0})=a_{0,0}$, $s(y_{0,0})=b_{0,0}$ satisfies $\theta_{0,0} = \Psi^0$.

\item Suppose then $n = h+1$ and tuples $a_{h,i}$ and $b_{h,i}$ have been played in the previous round successfully by player II and so that every assignment $s(x)=a_{h,i}$, $s(y)=b_{h,i}$ is in $X$. First player I chooses some tuple $a_{n,-n}$. Then player II chooses $b_{n,-n}$ to be some member of $F(s)$, for $s(x)=a_{n,-n}$, as above. Then II chooses $a_{n,i} = a_{h,i}$ and $b_{h,i} = b_{h,i}$ for $-h \leq i \leq p_h$. By the construction and the assumption, the assignment $s(x_{n,i})=a_{n,i}$, $s(y_{n,i})=b_{n,i}$ satisfies
\begin{align*}
\bigwedge_{-n \leq i \leq p_h} \theta_{n,i} \ja \bigwedge_{-n+1 \leq i \leq p_{n-1}} x_{n,i} y_{n,i} = x_{n-1,i} y_{n-1,i}.
\end{align*}
Now for each $a_{n,i}b_{n,i}$ which have already been played i.e. the pairs with $-n \leq i \leq p_h$, there is some assignment in $X$ corresponding to it. So for each $1 \leq i \leq m$ and $-n \leq j,k \leq p_h$, if $s(w^i_{n,j})=s(w^i_{n,k})$ (or $w_i$ is empty), then by (\ref{atomhold}), there is $t \in X$ such that $t(u_iw_i)=s(u^i_{n,j}w^i_{n,j})$ and $t(v_i)=s(v^i_{n,k})$. The set \begin{equation*}
\{(i,j,k) \mid 1 \leq i \leq m,\textrm{ } -n \leq j,k \leq p_h\}
\end{equation*} is of size $p_n - p_h$, so player II can play each remaining $a_{n,i}$ and $b_{n,i}$ as some $t(x)$ and $t(y)$ for some appropriate $t \in X$ so that the formula
\begin{align}\label{atominmuunnos}
\bigwedge_{\substack{1 \leq i \leq m\\ -n \leq j,k \leq p_h}} (\pi^i_{n,j,k} \tai 
 \bigvee_{p_h < l \leq p_n} u^i_{n,j} v^i_{n,k} w^i_{n,j} = u^i_{n,l} v^i_{n,l} w^i_{n,l})
\end{align}
holds for the assignment $s(x_{n,i})=a_{n,i}$, $s(y_{n,i})=b_{n,i}$. Then by (\ref{atomhold}) and the construction,
\begin{equation*}
\bigwedge_{p_{h+1} \leq i \leq p_n} \theta_{n,i}
\end{equation*}
holds for $s$ and thus $M \models_s \Psi^n$.
\end{itemize}
Hence there is a winning strategy for player II.

Suppose then $M$ is a countable model of $\Phi$. We let $a_{i,-i}$, $i < \omega$, be an enumeration of $M^r$. We play the game $G(M,\Phi)$ letting player I play the sequence $a_{n,-n}$ as his $n$:th move. Let $s$ be the assignment determined by the play where player II follows her winning strategy.
Let $X$ be the team consisting of the assignments $t(x) = s(x_{n,i})$, $t(y)=s(y_{n,i})$, for $n < \omega$, $-n \leq i \leq p_n$. Every formula $\theta_{n,i}$ holds for $s$, so
\begin{equation*}
M \models_X \theta.
\end{equation*}
Suppose $t,t' \in X$ and $t(w_i)=t'(w_i)$ (or $w_i$ is empty) for some $1 \leq i \leq m$. Then $t$ and $t'$ correspond to some $a_{n,j}b_{n,j}$ and $a_{n',k}b_{n',k}$. If $h =  $ max$\{n,n'\} +1$, then $a_{n,j}b_{n,j} = a_{h,j}b_{h,j}$, $a_{n',k}b_{n',k}=a_{h,k}b_{h,k}$ and $-h \leq j,k \leq p_{h-1}$. Because $s$ satisfies the last conjunct of $\Psi^h$ i.e. the formula
$$\bigwedge_{\substack{1 \leq i \leq m\\ -h \leq j,k \leq p_{h-1}}} (\pi^i_{h,j,k} \tai
 \bigvee_{p_{h-1} < l \leq p_h} u^i_{h,j} v^i_{h,k} w^i_{h,j} = u^i_{h,l} v^i_{h,l} w^i_{h,l})),$$
there is $t''\in X$ corresponding to some $a_{h,l}b_{h,l}$ such that $t''(u_iw_i)=t(u_iw_i)$ and $t''(v_i)=t'(v_i)$. Hence
\begin{equation*}
M \models_{X} \bigwedge_{1 \leq i \leq m} u_i \bot_{w_i} v_i.
\end{equation*}
The team $X$ can now be presented as $\{\emptyset\}(M^r/x)(F/y)$ for $F(t)=\{b_{n,i} \mid t(x)=a_{n,i}, n < \omega ,-n \leq i \leq p_n\}$ where $F(t)$ is always non-empty for $t \in \{\emptyset\}(M^r/x)$. Hence $M \models \phi$.
\end{proof}

Next we will define a concept of a recursively saturated model that will be important for our proof.

\begin{maa}
A model $M$ is recursively saturated if it satisfies
\begin{equation*}
\forall \yli{x}((\bigwedge_n\exists y\bigwedge_{m\leq n}\phi_m(\yli{x},y))\rightarrow
\on y\bigwedge_n\phi_n(\yli{x},y))
\end{equation*}
whenever $\{\phi_n(\yli{x},y) \mid n \in \N\}$ is recursive.
\end{maa}

The following proposition is needed.

\begin{prop}[\cite{jonjohn}]\label{onrec}
For every infinite model $M$, there is a recursively saturated countable model $M'$ such that $M \equiv M'$.
\end{prop}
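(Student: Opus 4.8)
The plan is to prove this by the classical chain construction for recursively saturated models; we work, as is implicit throughout the paper, over a countable (in particular recursive) vocabulary. First I would apply the downward L\"owenheim--Skolem theorem to pass to a countable elementary submodel $M_0 \preceq M$; then $M_0$ is infinite, countable and $M_0 \equiv M$, so it suffices to build from $M_0$ a countable recursively saturated $M'$ with $M' \equiv M_0$. The model $M'$ will be the union of an elementary chain $M_0 \preceq M_1 \preceq M_2 \preceq \cdots$ in which each successor step realizes all the recursive $1$-types over the previous model that the construction is obliged to realize.

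The key ingredient is an extension step: for every countable model $N$ there is a countable elementary extension $N \preceq N^{+}$ such that, whenever $\bar a$ is a finite tuple from $N$ and $\Sigma(\bar x, y) = \{\phi_n(\bar x, y) \mid n \in \N\}$ is a recursive set of first-order formulas with $\Sigma(\bar a, y)$ finitely satisfiable in $N$ (i.e. $N \models \exists y \bigwedge_{m \leq n} \phi_m(\bar a, y)$ for every $n$), the type $\Sigma(\bar a, y)$ is realized in $N^{+}$. To obtain $N^{+}$ I would list all such pairs $(\bar a_i, \Sigma_i)$, $i \in \N$ (there are only countably many, since $N$ is countable and there are countably many recursive sets of formulas), introduce a fresh constant $c_i$ for each, and argue that $\mathrm{ElDiag}(N) \cup \bigcup_i \{\phi(\bar a_i, c_i) \mid \phi \in \Sigma_i\}$ is finitely satisfiable: a finite fragment involves only finitely many constants $c_i$ and, for each, only finitely many formulas of $\Sigma_i$, so by finite satisfiability of each $\Sigma_i$ in $N$ these constants can be interpreted by suitable elements of $N$, keeping $N$ itself as a model of $\mathrm{ElDiag}(N)$. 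By compactness the whole theory has a model; its reduct to the language of $N$ is an elementary extension of $N$ in which each $\Sigma_i(\bar a_i, y)$ is realized by the interpretation of $c_i$, and by downward L\"owenheim--Skolem a countable elementary submodel containing $N$ together with all these realizations is the required $N^{+}$.

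Finally I would set $M_{k+1} = M_k^{+}$ and $M' = \bigcup_k M_k$, and check the three required properties. Countability is clear; $M_0 \preceq M'$ by Tarski's elementary chain theorem, so $M' \equiv M_0 \equiv M$; and for recursive saturation, given a recursive $\{\phi_n(\bar x, y)\}$, a finite tuple $\bar a$ from $M'$, and $M' \models \bigwedge_n \exists y \bigwedge_{m \leq n}\phi_m(\bar a, y)$, choose $k$ with $\bar a \in M_k$. Since $M_k \preceq M'$, the type $\{\phi_n(\bar a, y)\}$ is already finitely satisfiable in $M_k$, hence realized by some $b \in M_{k+1} \subseteq M'$, and $M_{k+1} \preceq M'$ gives $M' \models \bigwedge_n \phi_n(\bar a, b)$. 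The hard part is really the organization of the extension step: ensuring a single countable elementary extension simultaneously realizes \emph{all} the countably many finitely-satisfiable recursive $1$-types over $N$ (handled by the compactness argument with disjoint fresh constants), and observing that finite satisfiability of a recursive type is preserved from $M'$ down to each $M_k$ precisely because the chain is elementary. As this is a well-known fact, in the paper I would give this sketch only briefly, or simply refer to \cite{jonjohn}.
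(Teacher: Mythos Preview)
Your argument is correct and is the standard elementary-chain construction for producing countable recursively saturated models. The paper itself gives no proof of this proposition at all: it simply states it with a citation to Barwise--Schlipf \cite{jonjohn}, so your sketch in fact supplies more detail than the paper does.
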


Over a recursively saturated model, we can replace the game expression $\Phi$ by a conjunction of its approximations $\Phi_n$.

\begin{prop}\label{rec}
If $M$ is a recursively saturated (or finite) model, then
\begin{equation*}
M \models \Phi \leftrightarrow \bigwedge_n \Phi^n.
\end{equation*}
\end{prop}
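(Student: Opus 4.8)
The plan is to run the standard ``Barwise approximation'' argument (cf.\ \cite{jon,juhajouko}), read game-theoretically as in the proof of Proposition~\ref{coun}. Recall that $M\models\Phi$ means that player~II has a winning strategy in the infinite game $G(M,\Phi)$, and that, since each $\Phi^n$ is first-order, $M\models\Phi^n$ means (by Skolemization, i.e.\ the game semantics of first-order logic) that II has a winning strategy in the length-$n$ truncation $G(M,\Phi^n)$ of that same game; the moves and rules of $G(M,\Phi^n)$ agree with those of $G(M,\Phi)$ for the first $n$ levels, II winning a play of $G(M,\Phi^n)$ iff $\Psi^0,\dots,\Psi^n$ hold along it, and winning a play of $G(M,\Phi)$ iff $\Psi^k$ holds for every $k$. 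The implication $M\models\Phi\Rightarrow M\models\bigwedge_n\Phi^n$ is then immediate and holds in \emph{every} model: following a winning strategy for II in $G(M,\Phi)$ for the first $n$ levels is a winning strategy in $G(M,\Phi^n)$, the winning condition of the latter being weaker.

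For the converse I use recursive saturation; note first that finite models are recursively saturated, since in a finite $M$ a descending chain of nonempty subsets of some $M^k$ has nonempty intersection, which is exactly the instance of the defining scheme needed, so the clause ``or finite'' is subsumed. Assume $M$ recursively saturated and $M\models\Phi^n$ for all $n$. Call a finite partial play $\pi$ (a tuple $\bar c$ recording all moves so far, ending with a move of II, with levels $0,\dots,h$ fixed) \emph{$m$-safe} if from $\pi$ player~II can force $\Psi^{h+1},\dots,\Psi^{h+m}$ over the next $m$ levels; this is expressed by a first-order formula $\sigma_m(\bar c)$ obtained from $\Phi^{h+m}$ by deleting the first $h+1$ quantifier blocks and the already-verified conjuncts $\Psi^0,\dots,\Psi^h$ and substituting $\bar c$ for their variables, and $\sigma_m$ is produced from $h$ and $m$ by a single algorithm (the $p_j$ and the shapes of the $\Psi^j$ being recursively given). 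By hypothesis the empty play is $m$-safe for every $m$. Build II's strategy maintaining the invariant \emph{``the current partial play is $m$-safe for every $m$''}: if the invariant holds at a play $\bar c$ after level $h$ and I then plays a tuple $a$, consider the recursive set $\{\Psi^{h+1}(\bar c\,a\,\bar d)\}\cup\{\sigma_m(\bar c\,a\,\bar d)\mid m\in\N\}$ in the variable tuple $\bar d$ of II's level-$(h+1)$ moves; it is finitely satisfiable, since for $\sigma_{m_1},\dots,\sigma_{m_k}$ with $N=\max_j m_j$ the $(N+1)$-safety of $\bar c$ gives a legal $\bar d$ that makes $\Psi^{h+1}$ true and leaves an $N$-safe, hence $m_j$-safe, position. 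Recursive saturation (which also covers recursive types in finitely many variables, a standard consequence of the definition) supplies a single $\bar d$ realizing the whole set; II plays it, $\Psi^{h+1}$ holds, and the invariant is preserved. The opening level, where II must choose $b_{0,0}$ with $\theta_{0,0}$ true after I plays $a_{0,0}$, is handled identically, finite satisfiability coming straight from $M\models\Phi^m$. Running this through all levels yields a play along which every $\Psi^k$ holds, i.e.\ a winning strategy for II, so $M\models\Phi$.

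The only real obstacle is this coherence step in the converse: the Skolem functions witnessing the different approximations $\Phi^n$ need not agree, so they cannot simply be glued, and recursive saturation is exactly what lets us commit, at each level, to a response keeping \emph{all} approximations alive at once. The rest — checking that the formulas $\sigma_m$ are genuinely first-order and uniformly computable, and that the parameter tuples, though lengthening from level to level, remain finite at every stage — is routine bookkeeping, and for finite $M$ the saturation step degenerates to the elementary observation about descending chains of finite sets noted above.
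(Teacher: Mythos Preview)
Your argument is correct and is precisely the standard Barwise approximation argument via recursive saturation that the paper intends: the paper's own proof of this proposition consists only of the sentence ``The proof is analogous to the proof of Proposition 15 in \cite{juhajouko},'' and what you have written is exactly that analogous proof spelled out for the present game expression. In particular your level-by-level realization of the recursive type $\{\Psi^{h+1}\}\cup\{\sigma_m\mid m\in\N\}$ and your handling of the finite case are the expected steps.
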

\begin{proof}
The proof is analogous to the proof of Proposition 15 in \cite{juhajouko}.
\end{proof}

\begin{kor}\label{kor}
If $M$ is a countable recursively saturated (or finite) model, then
\begin{equation*}
M \models \phi \leftrightarrow \bigwedge_n \Phi^n.
\end{equation*}
\end{kor}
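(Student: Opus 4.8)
The plan is to obtain this corollary immediately by chaining Proposition \ref{coun} with Proposition \ref{rec}, since those two statements already carry all the substantive content. Throughout I would take $M$ to be a countable recursively saturated (or finite) model, and note at the outset that in either case $M$ is countable, so the countability hypothesis that appears in Proposition \ref{coun} is satisfied.

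For the left-to-right direction I would argue as follows: if $M \models \phi$, then $M \models \Phi$ by the first assertion of Proposition \ref{coun} (namely $\phi \models \Phi$); and since $M$ is recursively saturated or finite, Proposition \ref{rec} gives $M \models \Phi \leftrightarrow \bigwedge_n \Phi^n$, so $M \models \Phi^n$ for every $n$, i.e.\ $M \models \bigwedge_n \Phi^n$. For the converse: if $M \models \bigwedge_n \Phi^n$, then $M \models \Phi^n$ for each $n$, so $M \models \Phi$ by Proposition \ref{rec}; and since $M$ is countable, the second assertion of Proposition \ref{coun} (that $\Phi \models \phi$ holds over countable models) gives $M \models \phi$.

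I do not expect any genuine obstacle here. The only point needing care is the bookkeeping that ``countable recursively saturated (or finite)'' simultaneously meets the hypotheses of both cited results --- Proposition \ref{coun} uses countability for the implication $\Phi \models \phi$, while Proposition \ref{rec} uses recursive saturation or finiteness --- and since finite models are in particular countable this causes no trouble. All the real difficulty (the winning-strategy construction for the game expression $\Phi$ in Proposition \ref{coun}, and the recursive-saturation/compactness argument collapsing $\Phi$ to $\bigwedge_n \Phi^n$ in Proposition \ref{rec}) has already been carried out, so the corollary is a one-line consequence.
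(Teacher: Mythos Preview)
Your proposal is correct and is exactly the paper's approach: the paper's proof is the single line ``By propositions \ref{coun} and \ref{rec},'' and you have simply unpacked that citation into the two obvious implications, noting (correctly) that the finite case is automatically countable so both hypotheses are met.
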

\begin{proof}
By propositions \ref{coun} and \ref{rec}.
\end{proof}

Now we can prove the main result of this article.

\begin{lau}
Let $T$ be a set of sentences of independence logic and $\phi \in$ FO. Then
\begin{equation*}
T \vdash_{\I} \phi \Leftrightarrow T \models \phi.
\end{equation*}
\end{lau}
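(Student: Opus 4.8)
The plan is to establish the two implications separately. The direction $T \vdash_{\I} \phi \Rightarrow T \models \phi$ is exactly the Soundness Theorem (Proposition~\ref{soundnessthm}), so all the work lies in the converse, which I would prove in contrapositive form: assuming $T \nvdash_{\I} \phi$, I construct a model of $T$ in which $\phi$ fails. (Recall that for a sentence $\chi$ and the unique team $X=\{\emptyset\}$ over $M$, the relation $M \models_X \chi$ is just ordinary truth $M \models \chi$, so producing such a model genuinely witnesses $T \not\models \phi$; if $\phi$ has free variables one treats them as fresh constants, as in the roadmap.)

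Step one is to pass from $T$ to a first-order theory of approximations. For each $\psi \in T$ I chain the normal-form derivation $\psi \vdash_{\I} \psi'$ (with $\psi'$ of the shape (\ref{norm2})) with the derivations of the finite approximations $\psi' \vdash_{\I} \Phi^n_{\psi'}$, obtaining $\psi \vdash_{\I} \Phi^n_{\psi'}$ for every $n$. I then put
$$T^* = \{\, \Phi^n_{\psi'} \mid \psi \in T,\ n \in \N \,\},$$
a set of first-order sentences with $T \vdash_{\I} \chi$ for every $\chi \in T^*$. Since our calculus contains a complete natural-deduction system for first-order logic and $T^* \cup \{\phi\}$ is first-order, a first-order derivation $T^* \vdash \phi$ would yield $T \vdash_{\I} \phi$, contrary to assumption. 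Hence $T^* \nvdash \phi$, so by the first-order Completeness Theorem $T^* \cup \{\neg\phi\}$ is satisfiable.

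Step two is to upgrade a model of $T^* \cup \{\neg\phi\}$ to a model of $T \cup \{\neg\phi\}$. Fix $N \models T^* \cup \{\neg\phi\}$. If $N$ is finite, set $M := N$; otherwise apply Proposition~\ref{onrec} to obtain a countable recursively saturated $M \equiv N$, so that still $M \models T^* \cup \{\neg\phi\}$. In either case $M$ is finite or countable recursively saturated, and $M \models \Phi^n_{\psi'}$ for all $n$ and all $\psi \in T$, i.e.\ $M \models \bigwedge_n \Phi^n_{\psi'}$; by Corollary~\ref{kor} this gives $M \models \psi'$, hence $M \models \psi$ since $\psi \equiv \psi'$. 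Thus $M \models T$ while $M \models \neg\phi$, so $T \not\models \phi$, completing the contrapositive and the proof.

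The bookkeeping with $T^*$ and the appeal to first-order completeness are routine; the substantive content is imported, namely the derivations (\ref{norm2})$\,\rightsquigarrow\,\Phi^n$ and Corollary~\ref{kor}, the latter resting on the game-expression analysis of $\Phi$ (Propositions~\ref{coun} and~\ref{rec}) together with Proposition~\ref{onrec}. The one point requiring care is the interplay between the \emph{syntactic} fact $T \vdash_{\I} \Phi^n_{\psi'}$ and the \emph{semantic} fact that the $\Phi^n_{\psi'}$ jointly determine $\psi'$ only over finite or countable recursively saturated models: it is essential that the model extracted from $T^* \cup \{\neg\phi\}$ can be taken of exactly that form \emph{without} destroying $\neg\phi$, which is precisely what elementary equivalence to a countable recursively saturated model (Proposition~\ref{onrec}) secures.
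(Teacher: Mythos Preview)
Your argument is correct and follows the paper's own proof essentially step for step: pass to the first-order theory $T^*$ of approximations, use that the calculus contains first-order natural deduction to conclude $T^*\cup\{\neg\phi\}$ is consistent, extract a countable recursively saturated (or finite) model, and apply Corollary~\ref{kor}. You are merely more explicit than the paper in two places---you spell out the appeal to Proposition~\ref{onrec} and the finite/infinite case split, where the paper simply writes ``Let $M$ be a recursively saturated countable (or finite) model of this theory''---but the structure is identical.
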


\begin{proof}
Assume first that $T \not\vdash_{\I} \phi$. Let $T^*$ consist of all the approximations of the independence sentences in $T$. Since the approximations are provable from $T$, we must have $T^* \not\vdash_{\I} \phi$. Our deduction system covers all the first-order inference rules, so $T^* \not\vdash_{\textrm{FO}} \phi$ and thus $T^* \cup \{\neg \phi\}$ is deductively consistent in first-order logic. Let $M$ be a recursively saturated countable (or finite) model of this theory. By Corollary \ref{kor}, $M \models T \cup \{\neg \phi\}$ and thus $T \not\models \phi$.

The other direction follows from Proposition \ref{soundnessthm}.
\end{proof}

\section{Examples and open questions}

In this section we present examples for rules \ref{rule6}, \ref{rule7} and \ref{rule8} and consider some open questions regarding this topic.

\begin{esim}
Figure \ref{track}  lists  results from a track and field combined event meeting.
\begin{figure}[h]
\center
\begin{tabular}{|c|c|c|}

$\mathtt{athlete}$ & $\mathtt{event}$ & $\mathtt{result}$\\\hline
Hardee & 100m & 10.50 \\\hline
Schippers & High Jump & 1.69 \\\hline
Kazmirek & Shot Put & 14.20 \\\hline
Broersen & 200m & 24.57\\\hline
Garcia & Javelin & 66.48 \\\hline
Thiam & 800m & 2:22.98 \\\hline

$\vdots$ &$\vdots $ &$\vdots$ \\
\end{tabular} \caption{Result List\label{track}}
\end{figure}\\
In a single combined event competition, each athlete takes part in the same events. Therefore, and since the  list in Figure \ref{track} contains events from both men's and women's competitions, we may conclude that its completion, say $X$,
satisfies the following disjunction:
\begin{equation}\label{trakista}\indepc{\mathtt{ahtlete}}{\mathtt{event}} \tai \indepc{\mathtt{athlete}}{\mathtt{event}}.
\end{equation}
Now, using (essentially) rule \ref{rule6},  from \eqref{trakista} we obtain
\begin{equation}\label{trakki}
\on c(\indep{c}{\mathtt{athlete}}{\mathtt{event}} \ja (c=0\tai c=1))
\end{equation}
where $0$ and $1$ are two distinct constants.
%This means that we can quess a condition $c=0$ or $c=1$ for each entry of the table so that the table extended with these new values satisfies $\indepc{\mathtt{ahtlete}}{\mathtt{event}}$ conditionally on $c$. For instance
Hence \eqref{trakki} must be true for $X$. Indeed, we can extend $X$ with a new two-valued column $\mathtt{competition}$ as illustrated in Figure \ref{esim3}. Clearly this extension satisfies
$$\indep{\mathtt{competition}}{\mathtt{athlete}}{\mathtt{event}} \ja (\mathtt{competition}=\textrm{men}\tai \mathtt{competition}=\textrm{women}).$$
\begin{figure}[h]
\center
\begin{tabular}{|c|c|c|c|}
$\mathtt{athlete}$ & $\mathtt{event}$ & $\mathtt{result}$ &$\mathtt{competition}$\\\hline
Hardee & 100m & 10.50 & men\\\hline
Schippers & High Jump & 1.69 & women\\\hline
Kazmirek & Shot Put & 14.20 & men \\\hline
Broersen & 200m & 24.57 & women\\\hline
Garcia & Javelin & 66.48 & men \\\hline
Thiam & 800m & 2:22.98 & women \\\hline

$\vdots$ &$\vdots $ &$\vdots$& $\vdots$ \\
\end{tabular} \caption{Extended Result List\label{track2}}
\end{figure}\\
\end{esim}
\begin{esim}\label{esim3}
In this example we use independence introduction (rule \ref{rule7}) in a context of uniformly continuous functions.

\begin{enumerate}
\item For every $\ep > 0$ there is $\de > 0$ such that for every $x,y$, if $|x-y| < \de$, then $|f(x)-f(y)| < \ep$.

\item Therefore, for every $\ep > 0$ and $x$ there is $\de > 0$ such that $x$ and $\de$ are independent of each other for fixed $\ep$, and for every $y$, if $|x-y| < \de$, then $|f(x)-f(y)| < \ep$.
\end{enumerate}
\end{esim}
\begin{esim}
A semester in a university is divided into two consecutive periods. In each period students enroll in various courses taught by different lecturers. Let $X_1$  be a table storing information about this from Period 1 (see Figure \ref{fall}), and let $X_2$ be that from Period 2. If in Period 1, G\"odel teaches only the course Log2, then we have that $X_1 \models \mathtt{lecturer}= \textrm{ G\"{o}del} \rightarrow \mathtt{course} =\textrm{Log2}.$ Assume that some students give up after Period 1. Then we still obtain that $X_2(\mathtt{course},\mathtt{lecturer}) \sub X_1(\mathtt{course},\mathtt{lecturer})$. Now using rule \ref{rule8}, we may conclude that
$X_2 \models \mathtt{lecturer}= \textrm{ G\"{o}del} \rightarrow \mathtt{course} =\textrm{Log2}.$

\begin{figure}
\center
\begin{tabular}{|c|c|c|}
$\mathtt{course}$ & $\mathtt{student}$ & $\mathtt{lecturer}$\\\hline
Log2 & Andersson & G\"{o}del \\\hline
Math1 & Svensson & Leibniz \\\hline
Log1 & Karlsson & Frege \\\hline
Log2 & Svensson & G\"{o}del \\\hline

$\vdots$ &$\vdots $ &$\vdots$ \\
\end{tabular} \caption{$X_1$\label{fall}}
\end{figure}
\end{esim}

In the end we have some open questions.

\begin{itemize}
\item Is there are a natural generalization of this axiomatization that would cover all the first-order consequences of independence logic \textit{formulas}? If we want to use first-order approximations in our proof, we would perhaps want to construct these approximations so that they would not contain any new relation symbols. 

\item Suppose we allow only so-called pure independence atoms i.e. atoms of the form $t_1 \bot t_2$ in our syntax. Is there a similar deductive system for this syntactical restriction (pure independence logic)? It has been showed that pure independence logic is expressively as strong as independence logic \cite{pietrojouko}.

\item Our deduction system is still relatively weak. Can we somehow improve it in order to get for example all the atomic consequences of independence logic formulas? In principle this should be possible since independence atoms and independence logic formulas can be interpreted as first-order and existential second-order logic formulas, respectively.

\end{itemize}

\section*{Acknowledgements} The author was supported by the Research Foundation of the University of Helsinki, and would like to thank Juha Kontinen and Jouko V\"a\"an\"anen for a number of useful suggestions and comments, and Fan Yang for pointing out that some of the rules need extra conditions.

\end{document}